\documentclass{article}
\usepackage{geometry}
\geometry{letterpaper}                   
\usepackage{graphicx}
\usepackage{amsmath}
\usepackage{amssymb}
\usepackage{amsthm}
\usepackage{epstopdf}
\usepackage{paralist}
\usepackage{subfig}
\usepackage{listings}
\usepackage{xfrac}

\DeclareGraphicsRule{.tif}{png}{.png}{`convert #1 `dirname #1`/`basename #1 .tif`.png}

\parindent0pt
\sloppy
\textheight22cm
\textwidth16cm
\oddsidemargin0pt
\addtolength{\topmargin}{-1.6cm}
\pagestyle{plain}
\marginparwidth2cm

\newcommand{\field}[1]{\mathbb{#1}}
\newcommand {\N}        {\field{N} }
\newcommand {\Z}        {\field{Z} }
\newcommand {\R}        {\field{R} }

\newtheoremstyle{dotless}{}{}{\itshape}{}{\bfseries}{}{ }{}
\theoremstyle{dotless}
\renewenvironment{proof}{{\it Proof }}{\qed}

\theoremstyle{definition}
\newtheorem{thm}{Theorem}[section]
\newtheorem{lm}[thm]{Lemma}
\newtheorem{prop}[thm]{Proposition}
\newtheorem{cor}[thm]{Corollary}
\newtheorem{defn}{Definition}[section]
\newtheorem{ex}{Example}[section]

\title{Cellular Probabilistic Automata - A Novel Method for Uncertainty Propagation}
\author{Dominic Kohler\footnotemark[1]
\and Johannes M\"uller\footnotemark[2] 
\and Utz Wever\footnotemark[3]}

\begin{document}
\maketitle
\newcommand{\slugmaster}{%
\slugger{juq}{xxxx}{xx}{x}{x--x}}
\renewcommand{\thefootnote}{\fnsymbol{footnote}}
\footnotetext[1]{Technical University Munich, Centre for Mathematical Sciences, Boltzmannstr. 3, D-85748 Garching/Munich, Germany; Siemens AG, Corporate Technology, Otto-Hahn-Ring 6, 81730 Munich, Germany (dominic.kohler@mytum.de)}
\footnotetext[2]{Technical University Munich, Centre for Mathematical Sciences, Boltzmannstr. 3, D-85748 Garching/Munich, Germany (johannes.mueller@mytum.de)}
\footnotetext[3]{Siemens AG, Corporate Technology, Otto-Hahn-Ring 6, 81730 Munich, Germany (utz.wever@siemens.com)}
\renewcommand{\thefootnote}{\arabic{footnote}}

\begin{abstract} 
We propose a novel density based numerical method for uncertainty propagation under certain partial differential equation dynamics. The main idea is to translate them into objects that we call cellular probabilistic automata and to evolve the latter. The translation is achieved by state discretization as in set oriented numerics and the use of the locality concept from cellular automata theory. We develop the method at the example of initial value uncertainties under deterministic dynamics and prove a consistency result. As an application we discuss arsenate transportation and adsorption in drinking water pipes and compare our results to Monte Carlo computations. 
\end{abstract}



\section{Introduction}


The numerical treatment of differential equations that are subject to uncertain data has attracted a lot of interest lately. A prominent approach is to use Polynomial Chaos expansions \cite{WienerPC, GhanemSpanos, SchwabPC}. It can be improved by decomposing the random space \cite{WanKarniadakis}, and only recently numerical implementations of this improvement have been investigated \cite{AugustinRentrop}. Alternative approaches are based on the Monte Carlo idea, like the Markov Chain Monte Carlo Method \cite{SpiegelhalterGilksMCMC}, Latin Hypercube Sampling \cite{LohLHS}, the Quasi Monte Carlo Method \cite{FoxQMC}, importance sampling \cite{MelchiersIS} and the Multi-Level Monte Carlo Method \cite{SchwabMLMC}. Further well-known approaches use the It\^o calculus \cite{Oksendal,KloedenPlaten, KloedenPlaten2, Roessler} or the Fokker-Planck equation \cite{Langtangen}. Although the approaches have proven to be successful for many tasks, they often encounter certain efficiency restrictions in higher dimensions of the random space. New methods are needed to meet these challenges.

Time-continuous dynamical systems on continuous state space can be approximated by time-discrete Markov chains on finite state space \cite{Hsu}. This technique of state space discretization has led to the powerful tools of set oriented numerics \cite{DellnitzJunge1, DellnitzJunge2}. It is especially useful to study ergodic theory, asymptotic dynamics and optimal control \cite{Kushner, GrueneJunge}. Recently, also contributions to uncertainty quantification have been made \cite{JungeMarsdenMezic}. 

In this paper we introduce a novel numerical scheme for uncertainty propagation in certain spatio-temporal processes. It is based on the concept of state space discretization and on ideas from cellular automata (CA) theory \cite{CAsurveyKari, CApatternformation, stochasticCA}. We develop the method at the example of the propagation of initial value uncertainties under deterministic partial differential equation (PDE) dynamics and pave the way towards an extension to more general stochastic influences on the system. 

In particular we introduce a discretization of PDE which do not depend explicitly on the independent variables. First a finite difference scheme is applied to a PDE; the spatial and temporal continuum is replaced by discrete sites and discrete time steps. Second the state of the resulting system is discretized. Because this procedure emphasizes the interaction between neighboring sites, a property that strongly resembles the locality and shift-invariance in cellular automata, the resulting completely discrete system is termed a cellular probabilistic automaton (CPA). Such an automaton is much simpler than the PDE and becomes accessible to very efficient simulation techniques. 

CPA basically consist of information about transition probabilities between discretized portions of phase space in a site's neighborhood. The transition probabilities are interpreted as to approximate the evolution of the system's probability density in transfer operator theory \cite{Lasota}. Hence CPA can be used for uncertainty propagation \cite{MezicRun}. While the translation from PDE into CPA may be rather time-consuming, the evolution of uncertainties with CPA is fast. The accuracy of the approximation depends on two parameters: one measures the state space resolution at every site, and the other the degree of locality, i.e. the extent to which correlations between neighboring sites are preserved. 

The paper is structured as follows. In Section \ref{sec:Densityup} we formulate the problem of initial value uncertainty propagation under deterministic dynamics and deterministic boundary conditions. Here we also present the idea of density based uncertainty propagation through phase space discretization. By exploiting locality and shift-invariance of our problem this leads to the definition and discussion of CPA in Section \ref{sec:CPA}. A consistency result for our construction is presented in Section \ref{sec:Consistency}. In Section \ref{sec:Example} we show how CPA can be extended to incorporate stochastic boundary conditions and apply the theory to the example of arsenate transportation and adsorption in water pipes. The results are compared to Monte Carlo computations. Finally we conclude in Section \ref{sec:Conclusions}.

\section{Density Based Uncertainty Propagation}

\label{sec:Densityup}

In this section we first formulate the problem and then develop the idea of density based uncertainty propagation. Finally the CPA idea is derived in this context. 

\subsection{Problem Formulation}

We are interested in the time evolution of uncertain initial data in a specific deterministic dynamical system. First we introduce some notation from probability theory and the Frobenius-Perron operator as the suitable tool to describe this process. Second we specify the deterministic dynamical system that we will work with, and third we formulate the problem. 

Let $(X, \mathcal{A}, \mu)$ be a probability space, $(X', \mathcal{A}', \mu')$ a measure space and $V:X \to X'$ a random variable with distribution $\mu_V$. We say that $V$ has density $g$ if there is $g \in \mathcal{L}^1(X',\mathcal{A}', \mu')$ such that 
$$\mu_V[A']=\int_{A'} g d\mu', \quad \text{ for all } A' \in \mathcal{A}'.$$
The set of densities on $(X',\mathcal{A}',\mu')$ is denoted by
$$D(X'):=\{g \in \mathcal{L}^1(X',\mathcal{A}',\mu')\,|\, g\geq 0, \|g\|_1=1\}.$$
Let now $(X', \mathcal{A}', \mu')=(\R^{mn},\mathcal{B}(\R^{mn}),\lambda)$, where $m,n \in \N$, $\mathcal{B}(\R^{mn})$ is the Borel $\sigma$-algebra and $\lambda$ the Lebesgue measure. A measurable map $S:\R^{mn} \to \R^{mn}$ is called non-singular if $\lambda(S^{-1}(A'))=0$ for all $A' \in \mathcal{B}(\R^{mn})$ with $\lambda(A')=0$. For any such map a unique operator can be defined on the basis of the Radon-Nikodym theorem \cite{Lasota}. 

\begin{defn}
Given a non-singular map $S:\R^{mn} \mapsto \R^{mn}$, for $g \in \mathcal{L}^1(\R^{mn})$ the Frobenius-Perron operator (FPO) $P_S: \mathcal{L}^1(\R^{mn}) \to \mathcal{L}^1(\R^{mn})$ is defined by 
$$\int_{A'} P_Sg(x)dx=\int_{S^{-1}(A')}g(x)dx \quad \forall A' \in \mathcal{B}(\R^{mn}).$$
\end{defn}

The FPO preserves positivity and normalization and hence describes how densities are mapped under phase space evolution. We focus on a particular type of phase space evolution. 

\begin{defn}
Consider a deterministic dynamical system $(T,\R^{mn},\Phi)$ specified as follows: 
\begin{compactitem}
\item [i)] $(T,+)$ is an additive half-group of time, 
\item [ii)] $\R^{mn}=\times_{I}\R^n$ is the state space, where $I=\{1,...,m\}$, 
\item [iii)] the flow $\Phi: T \times \R^{mn} \to \R^{mn}$ is non-singular for all $t \in T$,
\item [iv)] there is a neighborhood $U=\{-r,...,s\}$ with $r,s \in \N_0$, $r+s\leq m$, such that $\Phi$ has the locality property, i.e. that there is $h: T \times (\R^n)^{|U|} \to \R^n$ with 
$$\Phi(t,v)_i=h(t, v_{i-r},...,v_{i+s})$$
for all $t \in T, v=(v_1,...,v_m) \in \times_{I}\R^n$ and $i \in \{1+r,...,m-s\}$, 
\item [v)] and that the system acts as the identity on $K=\{1,...,r\}\cup \{m-s+1,...,m\}$, i.e. $\Phi(t,v)|_K=v|_K$ for all $t \in T$ and all $v \in \times_{I}\R^n$. 
\end{compactitem}
\end{defn}

We will write $\Phi^t(v):=\Phi(t,v)$ in the following and refer to \cite{GuckenheimerHolmes} for further information on dynamical systems. Assume that there is a compact $\Omega \subsetneq \R^n$ such that $\Omega^m$ is positively invariant under the flow and fix $\tau \in T, \tau \neq 0$. 

Our main application is the analysis of a PDE 
$$\partial_tv=\tilde h(\partial_{xx}v, \partial_x v, v), \quad v(x,t)\in \Omega$$
on a one-dimensional compact spatial domain $x \in [a,b]$ for $a,b \in \R$. Under certain regularity assumptions a dynamical system like the above is obtained by applying a finite difference method with space discretization $\Delta x=\frac{b-a}{m-1}$, where $m \in \N, m\geq 2$, and time step $\tau$. Then $U$ is naturally induced by the choice of the finite difference scheme; e.g. usually $U=\{-1,0,1\}$ is suitable to account for central second order difference quotients. Because of the PDE context we call $I$ the set of sites. By only considering trajectories with $v^0|_K=k \in \times_K\R^n$, the system can be interpreted as to obey boundary conditions. 

The time evolution of uncertain initial data in the deterministic dynamical system is described by real random variables $V^0,V^1,...:X \to \Omega^{m}$ on probability space $(X,\mathcal{A},\mu)$, where $V^{n+1}=\Phi^\tau V^n$. We focus on deterministic boundary conditions: $V^0(x)|_K=k \in \times_K\R^n$ for all $x \in X$. If $V^n$ has density $g^n \in D(\R^{mn})$, the density of $V^{n+1}$ is given by application of the associated FPO: $g^{n+1}=P_{\Phi^\tau}(g^n)$. The goal is to develop an algorithm that approximates the density evolution. It will be achieved by translating the system into a CPA in two steps. 
First the FPO is discretized via a state discretization procedure, and then locality and shift-invariance are used to further transform it into a CPA.

\subsection{State Space Discretization}

In this section first we introduce the concept of state space discretization. Second we investigate according densities, and third we construct a discretized version of the FPO. In principle these ideas are well-known in the literature \cite{DellnitzJunge1, DellnitzJunge2}. Here they are adapted to the special structure of the dynamical system. 

\begin{defn}
A partition or coding $E$ of $\Omega$ is a finite collection of disjoint sets $\{\Omega_e\}_{e \in E}$ whose union is $\Omega$. We call $e \in E$ the symbol of coding domain $\Omega_e$, and the coding map is the function $T: \Omega \mapsto E$, where $T(v)=e$ if $v \in \Omega_e$. A partition is called uniform, if there is a resolution $\Delta \Omega \in \R$ such that $\Omega_{e}$ is a $n$-dimensional hypercube with side length $\Delta \Omega$ for all $e \in E$. 
\end{defn}

To avoid technical complications in the following proofs we only consider uniform partitions while developing the theory. They are also the ones that are relevant in practical algorithms. 

A partition $E$ of $\Omega$ with coding map $T$ and $|E|=N$ naturally induces a partition $E^I$ of $\Omega^m$ with coding map 
$$\hat{T}: \Omega^m \to E^I, v \mapsto \hat{T}(v) \text{ with }(\hat{T}(v))_i=T(v_i) \text{ for } i \in I.$$ 
Note that $|E^I|=mN$. For $\varphi \in E^J$, where $J\subseteq I$, we write 
$$\Omega_{\varphi}=\{v \in \Omega^{m}\,|\, \forall j \in J:\, \hat T(v)(j)=\varphi (j) \}.$$

Now we study densities that are compatible with state space discretization. For this purpose we introduce the measure space $(E^I,\mathcal{P}(E^I), \gamma)$, where $\mathcal{P}(E^I)$ is the power set of $E^I$ and $\gamma$ is the counting measure. 
The densities $D(E^I)$ consist of the weight functions 
$$g:E^I \to [0,\infty], \quad g(\varphi)=p_\varphi,$$
where $(p_\varphi)_{\varphi \in E^I}$ are nonnegative numbers with $\sum_{\varphi \in E^I}p_\varphi=1$. 

\begin{defn} $ $
\begin{compactitem}
\item [i)] $\mathcal{L}^1_{\hat T}(\R^{mn})=\text{span}(B)$ is the finite-dimensional $\mathcal{L}^1(\R^{mn})$-subspace of piecewise constant functions with basis $B=\{\chi_{\Omega_\varphi}/\lambda(\Omega_\varphi)\}_{\varphi \in E^I}$. The set of piecewise constant densities is given by
$D_{\hat T}(\R^{mn}):=\mathcal{L}^1_{\hat T}(\R^{mn})\cap D(\R^{mn}).$
\item [ii)] The coordinate representation $\kappa_B: \mathcal{L}^1_{\hat T}(\R^{mn}) \to \R^{E^I}, \quad g \mapsto \kappa_B(g)$ with respect to the basis $B$ is given by 
$\kappa_B(g)(\varphi)=c_\varphi$ for $\varphi \in E^I$ and $g=\sum_{\psi \in E^I}\frac{c_\psi}{\lambda(\Omega_\psi)}\chi_{\Omega_{\psi}}$. Obviously $\kappa_B(D_{\hat T}(\R^{mn}))=D(E^I)$. 
\item [iii)] Let $\rho\in E^K$ such that $\rho_i=T(k_i)$ for all $i \in K$. The densities that are compatible with the boundary conditions are given by 
$$D_{BC}(E^I):=\{g \in D(E^I) \,|\, g(\varphi)=0 \text{ if } \varphi |_K\neq \rho \}.$$ 
\end{compactitem}
\end{defn}

By averaging in the coding domains every function in $\mathcal{L}^1(\R^{mn})$ can be mapped to a piecewise constant function. 

\begin{defn}
A restriction operator to the subspace of piecewise constant functions is given by  
$$R: \mathcal{L}^1(\R^{mn}) \to \mathcal{L}^1_{\hat T}(\R^{mn}), \quad R(g)=\sum_{\varphi \in E^I}\frac{c_\varphi}{\lambda(\Omega_{\varphi})} \chi_{\Omega_{\varphi}},$$
where 
$$c_\varphi=\int_{\Omega_{\varphi}}g(w)dw.$$
\end{defn}

$R$ is idempotent, i.e. $R \circ R=R$, and furthermore $R(D_{\hat T}(\R^{mn}))\subseteq D_{\hat T}(\R^{mn})$. In the following we will use the restriction operator to construct a discretized version of the FPO on density level: $RP_{\Phi^\tau}$. This procedure is well-known in ergodicity theory when invariant measures are approximated. There it is called Ulam's method \cite{Ulam}. 

The matrix representation of the linear $RP_{\Phi^\tau}|_{\mathcal{L}^1_{\hat T}(\R^{mn})}$ is given by $P_B=\kappa_BRP_{\Phi^\tau}\kappa_B^{-1} \in \R^{E^I\times E^I}$ with entries
\begin{align*}
P_{B,\varphi,\psi}=\int_{\Omega_{\psi}} P_{\Phi^\tau}\frac{\chi_{\Omega_\varphi}}{\lambda(\Omega_\varphi)}d\lambda=\int_{\Phi^{-\tau}(\Omega_\psi)} \frac{\chi_{\Omega_\varphi}}{\lambda(\Omega_\varphi)}d\lambda=\frac{\lambda(\Omega_\varphi \cap \Phi^{-\tau}(\Omega_\psi))}{\lambda(\Omega_\varphi)}.
\end{align*}
$P_{B,\varphi,\psi}$ is the probability of finding a realization of a random variable with uniform density in $\Omega_\varphi$ in $\Omega_\psi$, when $\Phi^\tau$ is applied. Hence we may interpret $P_{B,\varphi,\psi}$ as the transition rate from $\Omega_\varphi$ to $\Omega_\psi$ of a finite state Markov chain on $\{\Omega_\varphi\}_{\varphi \in E^I}$. This chain approximates the behavior of the dynamical system for uncertain initial values. 

In the following we regard $P_B:D_{BC}(E^I)\to D_{BC}(E^I)$ as a function which maps densities that are compatible with the boundary conditions by matrix multiplication.

\subsection{Using Locality - Towards Cellular Probabilistic Automata}

$E^I$ grows exponentially in $m$. For a growing number of sites it becomes numerically expensive to obtain global transition rates and to handle global states and densities. 

However, our dynamical system has a special structure: We use the locality property to approximate the set of global transition probabilities by several identical sets of local ones. This is possible out of two reasons. First because we find identical dynamics at all sites away from the boundaries, and second because the transition probabilities at one particular site mainly depend on the state of its neighborhood rather than on the whole global configuration. \\

For the formal definition of these local transition probabilities we need to introduce the shift by $l \in \Z$ on finite grid $J \subset \Z$. It is given by
$$\sigma_l: F^J \to F^{-l+J}, \quad \varphi \mapsto \sigma_l(\varphi), \quad \sigma_l(\varphi)(-l+j)=\varphi(j),$$
where $F$ is an arbitrary set, e. g. $F=E$ or $F=D(E^V)$. Moreover, for arbitrary $V=\{-p,...,q\}, W=\{-t,...,u\}$ with $p,q,t,u \in \N_0$ and $l \in \Z$ we use the conventions $l+V=\{-p+l,...,q+l\}$ and $V+W=\{-p-t,...,q+u\}$.

\begin{defn} 
\label{def:cpafromode}
Let $V=\{-p,...,q\}$ with $p,q \in \N_0$ and $p+q+r+s\leq m$. A local function $f_0: E^{U+V} \to D(E^V)$ is then given by 
$$f_0(\varphi)(\psi)=\frac{\lambda (\Omega_{\sigma_{-i}(\varphi)} \cap \Phi^{-\tau}(\Omega_{\sigma_{-i}(\psi)}))}{\lambda (\Omega_{\sigma_{-i}(\varphi)})},$$
where $i=1+p+r$, $\varphi \in E^{U+V}$ and $\psi \in E^V$ 
\end{defn}

Note that because of the locality property the definition is independent of the chosen site $i \in \{1+p+r,...,m-q-s\}$. The set $V$ controls the degree of locality, i.e. the number of sites that give rise to a local transition. It will turn out that by enlarging it we can diminish the error of the locality approximation. 

In the following section we develop a method of how to combine several such local transitions to approximate a global one. This will finish the construction of a CPA from the FPO.

\section{Cellular Probabilistic Automata}

\label{sec:CPA}

CPA are defined by extending the definition of deterministic CA according to \cite{CApatternformation, stochasticCA}: In  CPA the local transition function specifies a time- and space-independent probability distribution of next states for each possible neighborhood configuration. Because we do not want to follow one realization but rather the whole ensemble, unlike in the literature we define CPA to work on densities. This enables their utilization for uncertainty propagation. 

In the last chapter we showed how the discretized FPO $P_B$ on state space $D_{BC}(E^I)$ can be used to approximate the FPO $P_{\Phi^\tau}$ on $D(\R^{mn})$. CPA further approximate the discretized FPO on a product space of local densities, see Fig. \ref{fig:relations} for a sketch. Uncertainty propagation with CPA therefore requires two definitions. First one about how to translate between global densities and the product space of local densities, and second one about how to evolve local densities in time with the help of the local function. 

Because the definitions can be best understood for $V=\{0\}$, in Section \ref{subsec:specialCPA} we first introduce CPA in this special case to demonstrate the basic construction. Afterwards we develop the de Bruijn calculus as a connection between local and global objects for more general $V$ in Section \ref{subsec:deBruijn}. This connection leads to the generalization of CPA to general $V$ in Section \ref{subsec:generalCPA}. 

\begin{figure}
\centering
\includegraphics[width=15cm]{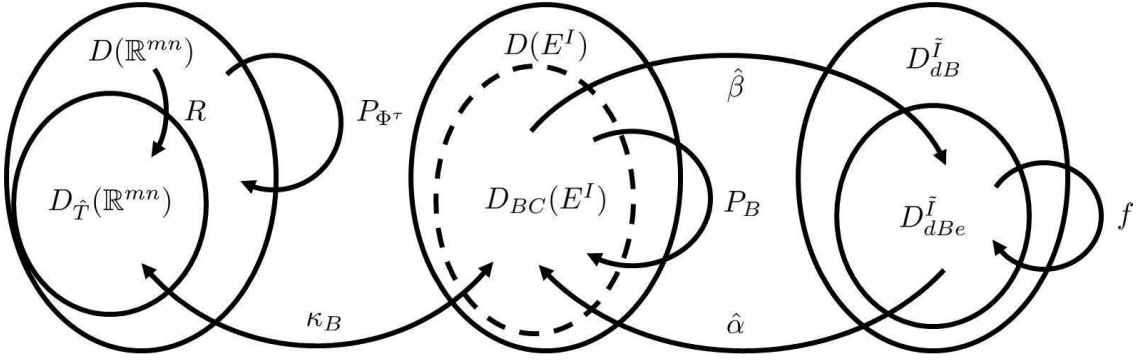}
\caption{The relations between the FPO $P_{\Phi^\tau}$ and its approximations. By state discretization we obtain the discretized FPO $P_B$ which still works globally, and by exploiting locality we approximate $P_B$ further by the CPA with global function $f$. The state space on which the CPA operates is a collection of local densities, see text. }
\label{fig:relations}
\end{figure}

\subsection{Cellular Probabilistic Automata: A Special Case}

\label{subsec:specialCPA}

A crucial step is to translate between global densities $D_{BC}(E^I)$ and a (subset of a) collection of local densities $(D(E^V))^{\tilde I}$, where $\tilde I$ contains the sites away from the boundary. We introduce an operator $\hat \beta: D_{BC}(E^I) \to (D(E^V))^{\tilde I}$ and a concatenation operator $\hat \alpha: (D(E^V))^{\tilde I} \to D_{BC}(E^I)$. $\hat \beta$ localizes the information to densities on states of length $V$ and thus erases far-reaching correlations. $\hat \alpha$ in turn constructs global densities out of information about local densities. As we will see in the next section, this process is by no means unique and requires some technical refinement of the space of local densities. However, for $V=\{0\}$ there are canonical definitions for $\hat \alpha$ and $\hat \beta$: multiplication of local probabilities for independent events and calculation of marginal distributions. 

\begin{defn} 
As before let $I=\{1,...,m\}$, $U=\{-r,...,s\}$ for $r,s,m \in \N_0$ with $1 \leq m, 1+r+s\leq m, $ and $K=\{1,...,r\}\cup \{m-s+1,...,m\}$. Furthermore $\tilde I=\{1+r,...,m-s\}$. 
\begin{compactitem}
\item [i)] We set $\hat \alpha: (D(E))^{\tilde I} \to D_{BC}(E^I), \,g \mapsto \hat \alpha(g)$ with
\begin{align*}
\hat \alpha(g)(\psi) &= \left\{\begin{array}{ll}
\prod_{i \in \tilde I}g(i)(\psi(i)) & \mbox{ if  } \psi |_K=\rho\\
0 & \mbox{ else } \\
\end{array}\right. 
\end{align*}
\item [ii)] and $\hat \beta: D_{BC}(E^I) \to (D(E))^{\tilde I}, \, g \mapsto \hat \beta (g)$ with 
$$\hat \beta (g)(i)(e)=\sum_{\chi \in E^I \text { s. t. }\chi (i)=e}g(\chi).$$
\end{compactitem}
\end{defn}

\begin{defn}
A cellular probabilistic automaton (CPA) is a tuple $(I, U, E, f_0)$, where for $m,r,s \in \N_0$ with $1 \leq m$ and $1+r+s\leq m$
\begin{compactitem}
\item [i)] $I=\{1,...,m\}$ is a finite grid, 
\item [ii)] $U=\{-r,...,s\}$ is the neighborhood, 
\item [iii)] $E$ is a finite set of local states
\item [iv)] and $f_0 : E^{U} \rightarrow D(E)$ is the local function. 
\end{compactitem}

With the boundary conditions $\rho \in E^K$ the global function is given by
\begin{equation*}
f: (D(E))^{\tilde I} \rightarrow (D(E))^{\tilde I}, \quad g \mapsto f(g), 
\end{equation*}
$$f(g)(i)(\psi) = \sum_{\overset{\varphi \in E^{U} \text{ s.t. } \varphi(k-i)=\rho(k)}{\text{for }k \in K\cap i+U}} \prod_{j \in (i+U)\backslash K} g(j)(\varphi(j))f_0(\varphi)(\psi).$$
The trajectory starting with  $g^0 \in (D(E))^{\tilde I}$ is given by the sequence $(g^n)_{n \in \N}$, where $g^n=f(g^{n-1})$ for $n \in \N ^+$.
\end{defn}

A CPA can be used to evolve an input distribution $\hat \beta (g)$ for $g \in D_{BC}(E^I)$ via the global function. After $n$ time steps the approximated global density is then given by $\hat \alpha f^n \hat \beta (g)$, see also Fig. \ref{fig:relations} with $D_{dB}^{\tilde I}=D_{dBe}^{\tilde I}=(D(E))^{\tilde I}$. The role model for the global function is the matrix operation with the discretized FPO $P_B$: The product of the transition probability with the probability of being in a preimage state is summed up over all possible preimage states. A probability is assigned to a preimage state $\varphi \in E^U$ by multiplication of local probabilities like in the definition of $\hat \alpha$. 

To cope with boundary conditions it is necessary that the global function only operates on $\tilde I$ instead of $I$. Deterministic CA are special cases of CPA: assume that for all $\varphi \in E^U$ there is $e \in E$ such that $f_0(\varphi)(e)=1$ and that the input is deterministic.

\subsection{De Bruijn Calculus}
\label{subsec:deBruijn} 

To generalize the construction to arbitrary $V$ we first study the relation between local and global objects in more depth. We introduce the de Bruijn density calculus on the basis of pattern ideas in cellular automata theory \cite{MSSCA, VollmarCA}, in the theory of de Bruijn graphs \cite{sutner} and in pair approximation \cite{pairformation}. 

As before, we introduce an operator $\beta_W$ that localizes the global information to densities on states of length $V$, this time $|V|\geq 1$. The precise definition of $\beta_W$ is still rather straight forward: marginal distributions dismiss all information but the one over a certain range $V$. We will find below that the reconstruction of global densities out of local information by $\alpha_W$ is more involving. However, let us first define $\beta_W$. 

\begin{defn}
Let $V=\{-p,...,q\}$ and $W=\{-t,...,u\}$ for $p,q \in \N$ and $t,u \in \Z, -t \leq u$. $\beta_W: D(E^{V+W}) \to (D(E^V))^W$ is given by
\begin{align*}
\beta_W(g)(i)(\psi)&=\sum_{\overset{\chi \in E^{V+W} \text{ s. t. }}{\chi|_{i+V}=\sigma_{-i}(\psi)}}g(\chi).
\end{align*}
\end{defn}

\begin{ex}
\label{ex:beta}
Let $E=V=W=\{0,1\}$, $p \in (0,1)$ and $g,\tilde g \in D(E^{V+W})$ be given by 
\begin{align*}
g(\psi) &= \left\{\begin{array}{ll}
p & \mbox{ if  } \psi=(001)\\
1-p & \mbox{ if  } \psi=(100)\\
0 & \mbox{ else } \\
\end{array}\right., &\tilde g(\psi)=\left\{\begin{array}{ll}
p(1-p) & \mbox{ if  } \psi=(000), \psi =(101)\\
p^2 & \mbox{ if  } \psi=(001)\\
(1-p)^2 & \mbox{ if  } \psi=(100)\\
0 & \mbox{ else } \\
\end{array}\right..
\end{align*}
We find that $\beta_W(g)=\beta_W(\tilde g)$ with 
\begin{align*}
\beta_W(g)(0)(\psi) &= \left\{\begin{array}{ll}
p & \mbox{ if  } \psi=(00)\\
1-p & \mbox{ if  } \psi=(10)\\
0 & \mbox{ else } \\
\end{array}\right., &\beta_W(g)(1)(\psi)=\left\{\begin{array}{ll}
(1-p) & \mbox{ if  } \psi=(00)\\
p & \mbox{ if  } \psi=(01)\\
0 & \mbox{ else } \\
\end{array}\right..
\end{align*}
\end{ex}
Ex. \ref{ex:beta} shows that information is lost under $\beta_W$, i.e. different global densities are mapped to the same collection of local densities. Now we are interested in $\alpha_W: D(E^{V+W}) \to (D(E^V))^W$. Although the properties of $\beta_W$ allow to define $\alpha_W$ as the solution of a linear nonnegative least squares problem \cite{nnls}, this algebraic approach is not appropriate. We rather suggest a probabilistic approach that fulfills two requirements: first $\alpha_W$ shall degenerate to simple multiplication of local densities for $V=\{0\}$, and second $\alpha_W$ and $\beta_W$ shall be inverse on important sets. 
For this purpose we first introduce several definitions, see also Fig. \ref{fig:alphabeta}. 

\begin{defn} $ $ 
\begin{compactitem}
\item [i)] $X_{dB}^W=(\mathcal{P}(E^V))^W$ is the set of de Bruijn states, where $\mathcal{P}(E^V)$ is the power set of $E^V$. The elements of $E^V$ are called patterns of size $V$.
\item [ii)] The subset of extendable de Bruijn states is given by $$X_{dBe}^W=\{\Phi \in X_{dB} \,|\, \forall i \in W \,\forall \varphi \in \Phi(i) \, \exists \psi \in E^{V+W} \, \forall i \in W : \,\sigma_i(\psi)|_{V}=\varphi \}, $$ the set of completely extendable de Bruijn states. 
\item [iii)] $D^W_{dB}=(D(E^V))^W$ is called the set of de Bruijn densities. 
\item [iv)] The subset of extendable de Bruijn densities is given by $$D^W_{dBe}=\{g \in D^W_{dB}\, |\, \times_{i \in W} \text{supp } g(i) \in X^W_{dBe}\}.$$ 
\end{compactitem}
\end{defn}

\begin{figure}
\centering
\hfill
\subfloat[\label{fig:alphabeta}]{\includegraphics[width=10cm]{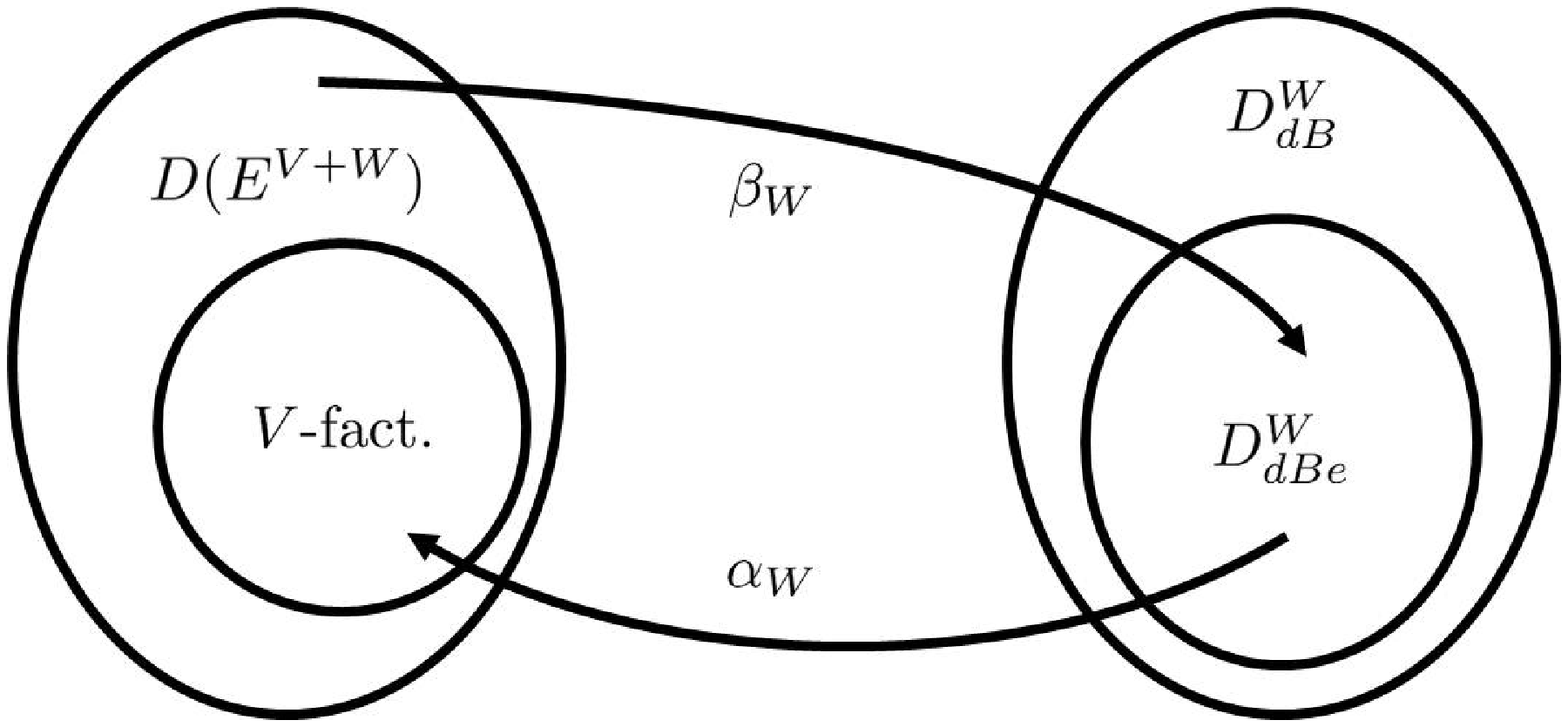}}
\hfill 
\subfloat[\label{fig:approximation}]{\includegraphics[width=4.5cm]{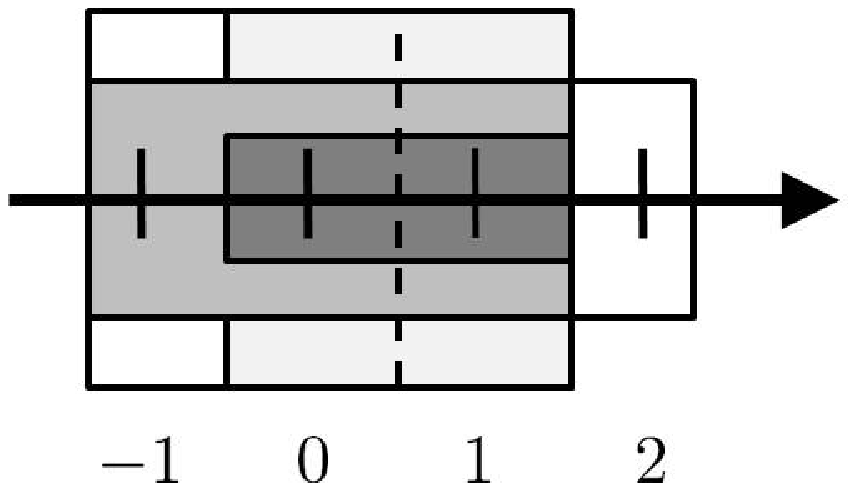}}
\hfill
\caption{(a) The relation between global densities and de Bruijn densities. (b) An example of an approximation for $W=\{-1,0,1\}$, $V=\{0,1\}$ and $i=0$. The thin and dark grey box that covers sites $0$ and $1$ is the factor at site $0$. The medium box that covers sites $-1$ to $2$ is the factor at site $1$: the local state at site $2$ depends on the local states from $-1$ to $1$ (medium grey part). In the approximation (dashed line) the local state at $2$ only depends on the one at $1$. The thick box covering sites $-1$ to $1$ is the factor at site $-1$. The local state at site $-1$ depends on the local states on sites $0$ and $1$ (light grey part). In the approximation the dependence stops again at the dashed line.}
\label{fig:deBruijn}
\end{figure}

The idea behind extendable de Bruijn states is that every pattern can be extended to a global state by gluing suitable patterns on it. An example of an extendable de Bruijn density is $\beta_W(g)$ in Example \ref{ex:beta}: for example pattern $(10)$ at site $0$ can be extended by $(01)$ at site $1$ to the global state $(101)$, because the patterns coincide in the overlapping state $0$. We find that this observation can be generalized. 

\begin{lm}
\label{lm:betadbce}
$\text{im }(\beta_W)\subseteq D^W_{dBe}$. 
\end{lm}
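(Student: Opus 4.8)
The plan is to unwind both definitions and observe that a single global word carrying positive $g$-mass serves, simultaneously at every site, as the witness demanded by the definition of $X^W_{dBe}$. Fix $g \in D(E^{V+W})$ and write $g' := \beta_W(g) \in D^W_{dB}$. For each $i \in W$ set $\Phi(i) := \text{supp}\, g'(i) \subseteq E^V$; by the definition of $D^W_{dBe}$ we must show that $\Phi := \times_{i \in W}\Phi(i)$ lies in $X^W_{dBe}$, i.e. that every pattern occurring in some $\Phi(i)$ can be completed to a word $\chi \in E^{V+W}$ whose $V$-window at each site $j \in W$ lies in $\Phi(j)$.

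First I would fix $i \in W$ and $\varphi \in \Phi(i) = \text{supp}\, g'(i)$. The defining formula for $\beta_W$ then gives
$$0 < g'(i)(\varphi) = \sum_{\chi \in E^{V+W},\ \chi|_{i+V} = \sigma_{-i}(\varphi)} g(\chi),$$
so there is at least one $\chi \in E^{V+W}$ with $g(\chi) > 0$ and $\chi|_{i+V} = \sigma_{-i}(\varphi)$. This $\chi$ is the candidate $\psi$ in the definition of $X^W_{dBe}$, and the identity $\chi|_{i+V} = \sigma_{-i}(\varphi)$ is exactly the shift-bookkeeping statement that the $V$-window of $\chi$ read at site $i$ equals $\varphi$ (equivalently $\sigma_i(\chi|_{i+V}) = \varphi$).

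Next I would verify that this same $\chi$ is consistent with $\Phi$ at every other site. For arbitrary $j \in W$ put $\varphi_j := \sigma_j(\chi|_{j+V}) \in E^V$. Then once more by the definition of $\beta_W$,
$$g'(j)(\varphi_j) = \sum_{\chi' \in E^{V+W},\ \chi'|_{j+V} = \chi|_{j+V}} g(\chi') \ \geq\ g(\chi) > 0,$$
hence $\varphi_j \in \text{supp}\, g'(j) = \Phi(j)$. Thus $\chi$ extends $\varphi$ to a global word lying over $\Phi$ at all sites, which is precisely the extendability condition. Since $i$ and $\varphi$ were arbitrary, $\Phi \in X^W_{dBe}$, so $g' = \beta_W(g) \in D^W_{dBe}$, and as $g$ was arbitrary, $\text{im}(\beta_W) \subseteq D^W_{dBe}$.

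The only delicate point is cosmetic rather than mathematical: keeping the shift operators $\sigma_{\pm i}$ and the window restrictions $(\cdot)|_{i+V}$ aligned, so that ``the $V$-pattern of $\chi$ seen at site $i$'' is correctly identified as an element of $E^V$ and matched against $\Phi(i)$. There is no genuine obstacle — the content is simply that marginalizing a bona fide joint distribution can never manufacture a locally supported pattern that fails to sit inside some global configuration of positive mass.
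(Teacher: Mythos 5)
Your argument is correct and is essentially the paper's own proof: both take a pattern of positive marginal mass at one site, extract from the defining sum of $\beta_W$ a global word $\chi$ with $g(\chi)>0$ whose window at that site is the given pattern, and then observe that the marginal of that same $\chi$ at every other site is automatically positive, so $\chi$ witnesses extendability. Your write-up is just a slightly more explicit version of the same two-step observation, with the shift bookkeeping spelled out.
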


\begin{proof}
Let $g\in \text{im } (\beta_W)$, $j \in W$ and $\varphi \in E^V$ with $g(j)(\varphi)>0$. Then there is $\tilde g \in D(E^{V+W})$ and $\psi \in E^{V+W}$ such that $\tilde g(\psi)>0$ and $\sigma_j(\psi)|_{V}=\varphi$. But furthermore already $g(i)(\sigma_i(\psi)|_{V})>0$ for all $i \in W$, and therefore $\times_{i \in W} \text{supp } g(i) \in X^W_{dBe}$. 
\end{proof} 


Because only the extendable de Bruijn densities are addressed by $\beta_W$, we only look for $\alpha_W$ on them. Our choice is motivated by the following calculation, for which we first introduce some notation. For $J \subseteq \Z$ and $A,B \subseteq E^{J}$, $\mu[A]=\sum_{\varphi \in A} g(\varphi)$ denotes the distribution associated with $g \in D(E^{J})$, and $\mu[A|B]=\frac{\mu[A\cap B]}{\mu[B]}$ the conditional probability. Furthermore
$$\left\{\psi|^J_{\tilde J}\right\}=\left\{\varphi \in E^{J} \,|\, \varphi|_{\tilde J}=\psi|_{\tilde J}\right\}$$
for $\tilde J, \hat J \subseteq \Z, \tilde J \subseteq J$, $\tilde J \subseteq \hat J, $ and $\psi \in E^{\hat J}$, and we also write $\left\{\psi|_{\tilde J}\right\}=\left\{\psi|^J_{\tilde J}\right\}$ if $J$ is clear from the context. We calculate for $i \in W$ and $\psi \in E^{V+W}$ that
\begin{align*}
\mu[\{\psi\}]=& \mu[\{\psi|_{\{-t,...,i\}+V}\}]\, \prod_{l=i+1}^u \frac{\mu[\{\psi|_{\{-t,...,l\}+V}\}]}{\mu[\{\psi|_{\{-t,...,l\}+V_-}\}]} \\
=& \prod_{k=-t}^{i-1}\mu[\{\psi|_{\{k-p\}}\}\, |\, \{\psi|_{\{k,...,i\}+V_+}\}] \, \mu[\{\psi|_{i+V}\}]\, \prod_{l=i+1}^u\mu[\{\psi|_{\{l+q\}}\}\, |\, \{\psi|_{\{-t,...,l\}+V_-}\}],
\end{align*}
where $V_+=\{-p+1,...,q\}$ and $V_-=\{-p,...,q-1\}$. If there are no far-reaching correlations, we expect the following approximations to be suitable: 
\begin{align*}
\mu[\{\psi|_{\{k-p\}}\}\, |\, \{\psi|_{\{k,...,i\}+V_+}\}] &\approx \mu[\{\psi|_{\{k-p\}}\}\, |\, \{\psi|_{k+V_+}\}], \\
\mu[\{\psi|_{\{l+q\}}\}\, |\, \{\psi|_{\{-t,...,l\}+V_-}\}] &\approx \mu[\{\psi|_{\{l+q\}}\}\, |\, \{\psi|_{l+V_-}\}]
\end{align*}
for $k \in \{-t,...,i-1\}$ and $l \in \{i+1,...,u\}$. They resemble a Markov property of order $|V|-1$ in space, see also Fig. \ref{fig:approximation}: a site's state is independent from the states at sites that are more than $|V|-1$ sites apart. 

\begin{lm}
Let $\mu_j$ be the distribution associated with $\beta_W(g)(j) \in D(E^V)$ for $j \in W$. Then
\begin{align*}
\mu[\{\psi|_{\{k-p\}}\}\, |\, \{\psi|_{k+V_+}\}] &= \mu_k[\{\sigma_k(\psi)|_{\{-p\}}\}\, |\, \{\sigma_k(\psi)|_{V_+}\}], \\
\mu[\{\psi|_{i+V}\}] &= \mu_i[\{\sigma_i(\psi)|_{V}\}], \\
\mu[\{\psi|_{\{l-p\}}\}\, |\, \{\psi|_{l+V_-}\}] &= \mu_l[\{\sigma_l(\psi)|_{\{-p\}}\}\, |\, \{\sigma_l(\psi)|_{V_-}\}], \\
\end{align*}
for $i \in W$, $k \in \{-t,...,i-1\}$ and $l \in \{i+1,...,u\}$. 
\end{lm}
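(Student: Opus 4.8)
The plan is to reduce all three identities to a single \emph{marginal-consistency} statement: for every $k \in W$, every $A \subseteq V$ and every $\psi \in E^{V+W}$,
$$\mu\big[\{\psi|_{k+A}\}\big] = \mu_k\big[\{\sigma_k(\psi)|_A\}\big].$$
Here the left-hand side is a cylinder in $E^{V+W}$, i.e. $\{\psi|_{k+A}\}=\{\psi|^{V+W}_{k+A}\}=\{\chi\in E^{V+W}\,|\,\chi(k+v)=\psi(k+v)\ \forall v\in A\}$, which makes sense because $k+V\subseteq V+W$ for all $k\in W$, hence $k+A\subseteq V+W$. To prove this sublemma I would just unfold the definitions: $\mu_k$ is the distribution of $\beta_W(g)(k)$, so $\mu_k[\{\sigma_k(\psi)|_A\}]=\sum_{\phi\in E^V,\ \phi|_A=\sigma_k(\psi)|_A}\beta_W(g)(k)(\phi)$, and by the definition of $\beta_W$ each summand equals $\sum_{\chi\in E^{V+W},\ \chi|_{k+V}=\sigma_{-k}(\phi)}g(\chi)$. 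Interchanging the two finite sums, a fixed $\chi$ contributes exactly once — namely for $\phi=\sigma_k(\chi)|_V$ — and only when $\chi(k+v)=\psi(k+v)$ for all $v\in A$; summing $g$ over precisely those $\chi$ yields $\mu[\{\psi|_{k+A}\}]$. The only thing to watch here is the shift convention $\sigma_l(\varphi)(-l+j)=\varphi(j)$: it gives that $\chi|_{k+V}=\sigma_{-k}(\phi)$ is equivalent to $\chi(k+v)=\phi(v)$ for all $v\in V$, which is exactly what lets the two nested shifts $\sigma_k,\sigma_{-k}$ cancel.

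Granting the sublemma, the three displayed equalities drop out in a few lines each. The middle one is literally the sublemma with $A=V$ and $k=i$. For the first, write $V=\{-p\}\cup V_+$ so that $\{\psi|_{\{k-p\}}\}\cap\{\psi|_{k+V_+}\}=\{\psi|_{k+V}\}$; applying the sublemma to this intersection (with $A=V$) and to $\{\psi|_{k+V_+}\}$ (with $A=V_+$) and dividing gives
$$\mu\big[\{\psi|_{\{k-p\}}\}\,\big|\,\{\psi|_{k+V_+}\}\big]=\frac{\mu_k[\{\sigma_k(\psi)|_V\}]}{\mu_k[\{\sigma_k(\psi)|_{V_+}\}]}=\mu_k\big[\{\sigma_k(\psi)|_{\{-p\}}\}\,\big|\,\{\sigma_k(\psi)|_{V_+}\}\big],$$
using $\{-p\}\cup V_+=V$ once more on the right. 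The third identity is handled in exactly the same way, with $V_-$ and the appropriate one-point boundary set (which together recover $V$) in place of $V_+$ and $\{-p\}$. I would also remark that a conditional probability is defined precisely when its conditioning event has positive mass, and the sublemma shows $\mu[\{\psi|_{k+V_+}\}]>0$ iff $\mu_k[\{\sigma_k(\psi)|_{V_+}\}]>0$, so both sides of each identity are defined under the same hypothesis.

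I do not expect a genuine obstacle: the lemma merely says that the marginals $\mu_k$ were defined so as to reproduce the global distribution on every window of width $V$, and that conditioning commutes with that restriction. The fiddliest point is purely notational — keeping track of which coordinate is being conditioned, verifying the inclusions $k+A\subseteq k+V\subseteq V+W$, and getting the directions of $\sigma_k$ and $\sigma_{-k}$ right in the sum interchange. Accordingly I would fix notation for $\{\psi|^{V+W}_{\tilde J}\}$ and for the shifts once at the start, prove the sublemma carefully, and then let the three statements follow mechanically.
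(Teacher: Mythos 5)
Your proposal is correct and takes essentially the same route as the paper: the paper's proof is precisely your sum-interchange computation, carried out inline for $A=V$ (the numerator) and $A=V_+$ (the denominator) rather than packaged as a general marginal-consistency sublemma for arbitrary $A\subseteq V$, with the remaining identities dismissed as ``without loss of generality.'' Your organization is marginally cleaner, and your phrasing of the third identity also quietly corrects what appears to be a typo in the lemma statement (the conditioned point should be $\{l+q\}$, i.e.\ $\{q\}$ after shifting, rather than $\{l-p\}$ and $\{-p\}$, as the paper's subsequent use of the lemma confirms).
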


{\em Proof}
Without loss of generality we prove the statement only for $k \in \{-t,...,i-1\}$: 
\begin{align*}
\mu[\{\psi|_{\{k-p\}}\}\, |\, \{\psi|_{k+V_+}\}] &=\frac{\sum_{\chi \in \{\psi|_{k+V}\}}g(\chi)}{\sum_{\chi \in \{\psi|_{k+V_+}\}}g(\chi)} = \frac{\beta_W(g)(k)(\sigma_k(\psi)|_{V})}{\sum_{\varphi \in \left\{\sigma_k(\psi)|^V_{V_+}\right\} }\sum_{\chi \in \left\{\sigma_{-k}(\varphi)|_{k+V}^{V+W}\right\}}g(\chi)} \\ 
&=\frac{\beta_W(g)(k)(\sigma_k(\psi)|_{V})}{\sum_{\varphi \in \left\{\sigma_k(\psi)|^V_{V_+}\right\}}\beta_W(g)(k)(\varphi)} = \mu_k[\{\sigma_k(\psi)|_{\{-p\}}\}\, |\, \{\sigma_k(\psi)|_{V_+}\}]. \qquad \endproof
\end{align*} 

Using the approximations introduced above and the lemma we find that

\begin{align*}
\mu[\{\psi\}]\approx &\prod_{k=-t}^{i-1}\mu[\{\psi|_{\{k-p\}}\}\, |\, \{\psi|_{k+V_+}\}] \, \mu[\{\psi|_{i+V}\}]\, \prod_{l=i+1}^u\mu[\{\psi|_{\{l+q\}}\}\, |\, \{\psi|_{l+V_-}\}] \\
=& \prod_{k=-t}^{i-1}\mu_k[\{\sigma_k(\psi)|_{\{-p\}}\}\, |\, \{\sigma_k(\psi)|_{V_+}\}] \, \mu_i[\{\sigma_i(\psi)|_{V}\}]\, \prod_{l=i+1}^u\mu_l[\{\sigma_l(\psi)|_{\{q\}}\}\, |\, \{\sigma_l(\psi)|_{V_-}\}]. 
\end{align*}

This way we are led to the following definition. 

\begin{defn}
Let $i \in W$. Then $\alpha_{W,i}: D^W_{dBe} \to D(E^{V+W})$ is given by
$$\alpha_{W,i}(g)(\psi)=\prod_{k=-t}^{i-1}\mu_k[\{\sigma_k(\psi)|_{\{-p\}}\}\,|\, \{\sigma_k(\psi)|_{V_+}\}]\, \mu_i[\{\sigma_i(\psi)|_{V}\}]\, \prod_{l=i+1}^u\mu_l[\{\sigma_l(\psi)|_{\{q\}}\}\, |\, \{\sigma_l(\psi)|_{V_-}\}], $$
where $\mu_j$ is the distribution associated with $g(j) \in D(E^V)$ for $j \in W$, and $\psi \in E^{V+W}$. 
\end{defn}

For $W=\{u\}, u \in \Z,$ the definition simplifies to $\alpha_{\{u\},u}(g)(\psi)=g(u)(\sigma_u(\psi))$. For $V=\{0\}$ the conditions vanish, and we get back simple multiplication, $\alpha_{W,i}(g)(\psi)=\prod_{k \in W}g(k)(\psi(k))$. We justify the general definition in the following lemma. 

\begin{lm}
Let $g \in D^W_{dBe}$ and $i \in W$. Then $\alpha_{W,i}(g) \in D(E^{V+W})$. 
\end{lm}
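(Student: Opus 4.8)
The statement asserts that $\alpha_{W,i}(g)$ is a genuine density on $E^{V+W}$, i.e. that it is nonnegative and sums to $1$. Nonnegativity is immediate: $\alpha_{W,i}(g)(\psi)$ is a product of conditional probabilities $\mu_k[\cdot\,|\,\cdot]$ and one unconditional probability $\mu_i[\cdot]$, all of which are nonnegative; the only subtlety is that the conditioning events must have positive measure, which is exactly where the hypothesis $g \in D^W_{dBe}$ enters — extendability guarantees that for each $k$ and each $\psi$ appearing with a nonzero factor, the relevant ``prefix'' pattern restriction $\{\sigma_k(\psi)|_{V_+}\}$ (resp. $\{\sigma_l(\psi)|_{V_-}\}$) lies in the support of $g(k)$ (resp. $g(l)$), so the denominators $\sum_{\varphi \in \{\sigma_k(\psi)|^V_{V_+}\}} g(k)(\varphi)$ are strictly positive whenever the numerator is. Where a conditioning set has measure zero, the corresponding numerator is also zero and the convention is to read the factor (hence the whole product) as $0$, so no division by zero occurs.

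The real content is $\sum_{\psi \in E^{V+W}} \alpha_{W,i}(g)(\psi) = 1$. The plan is to sum out the coordinates of $\psi$ one site at a time, working outward from site $i$, and to check that each summation telescopes a conditional probability back into a marginal. Concretely, I would write $\psi \in E^{V+W}$ as determined by its ``core'' restriction $\psi|_{i+V}$ together with the successive boundary symbols $\psi(k-p)$ for $k = -t,\dots,i-1$ (the left overhangs) and $\psi(l+q)$ for $l = i+1,\dots,u$ (the right overhangs); this is a bijective reparametrization of $E^{V+W}$ because $V+W = \{-p-t,\dots,q+u\}$ and each added site extends the pattern by exactly one new symbol. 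Summing first over the outermost right symbol $\psi(u+q)$: the only factor depending on it is $\mu_u[\{\sigma_u(\psi)|_{\{q\}}\}\,|\,\{\sigma_u(\psi)|_{V_-}\}]$, and $\sum_{\psi(u+q)} \mu_u[\{\sigma_u(\psi)|_{\{q\}}\}\,|\,\{\sigma_u(\psi)|_{V_-}\}] = 1$ since summing a conditional distribution over the conditioned coordinate gives $1$. Iterating this for $l = u, u-1, \dots, i+1$ removes all right factors, and symmetrically for $k = -t,\dots,i-1$ removes all left factors, leaving $\sum_{\psi|_{i+V}} \mu_i[\{\sigma_i(\psi)|_V\}] = \sum_{\psi \in E^V} g(i)(\psi) = 1$ because $g(i) \in D(E^V)$.

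The step I expect to be the main obstacle is the bookkeeping in the inductive peeling: one must verify that after summing out $\psi(l+q)$, the partially-summed expression still has the form of an $\alpha_{W,i}$-type product over the smaller index range $\{-t,\dots,l-1\}$ on the right, so that the induction hypothesis applies — in particular that removing one symbol does not disturb the core factor $\mu_i[\cdot]$ or any of the still-present conditional factors, and that the ``$V_-$'' conditioning set $\{\sigma_l(\psi)|_{V_-}\}$ genuinely does not involve the symbol $\psi(l+q)$ being summed (this is where the precise definitions $V_- = \{-p,\dots,q-1\}$, $V_+ = \{-p+1,\dots,q\}$ do the work). Once the reparametrization of $E^{V+W}$ is set up carefully and one checks the two ``edge cases'' separately — $W = \{u\}$ a singleton, where $\alpha_{\{u\},u}(g)(\psi) = g(u)(\sigma_u(\psi))$ sums to $1$ trivially, and $V = \{0\}$, where everything degenerates to a product of marginals $\prod_{k\in W} g(k)(\psi(k))$ — the general case follows by the same telescoping argument, and the nonnegativity/well-definedness discussion from the first paragraph completes the proof.
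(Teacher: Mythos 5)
Your proof is correct and takes essentially the same route as the paper's: a telescoping marginalization that sums out one overhanging symbol at a time, using that each conditional distribution sums to one over the conditioned coordinate, until only $\sum_{\psi\in E^{V}} g(i)(\psi)=1$ remains. The paper merely simplifies the bookkeeping by assuming $i=u$ without loss of generality (so only left factors occur) and peeling from the outermost site inward by induction on $|W|$; your explicit use of extendability to guarantee that the conditioning events have positive mass whenever the remaining factors are nonzero is a detail the paper leaves implicit.
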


\begin{proof}
Let $g \in D^W_{dBe}$ and $i \in W$. $\alpha_{W,i}(g)\not \equiv 0$ because there is at least one extendable pattern with nonzero probability. We prove that it is also normalized in the following. Without loss of generality we assume that $i=u$. Then
\begin{align*}
\sum_{\varphi \in E^{V+W}} \alpha_{W,u}(g)(\varphi)=& \sum_{\varphi \in E^{V+W}} \prod_{k=-t}^{u-1} \mu_k[\{\sigma_k(\varphi)|_{\{-p\}}\}\,|\, \{\sigma_k(\varphi)|_{V_+}\}]\, \mu_u[\{\sigma_u(\varphi)|_{V}\}]\, \\
=& \sum_{\tilde \varphi \in E^{V+ W\backslash\{-t\}}} \sum_{\varphi \in \left\{\tilde \varphi |_{V+W\backslash \{-t\}}^{V+W}\right\}} \mu_{-t}[\{\sigma_{-t}(\varphi)|_{\{-p\}}\}\,|\, \{\sigma_{-t}(\tilde \varphi)|_{V_+}\}] \\
& \prod_{k=-t+1}^{u-1} \mu_k[\{\sigma_k(\tilde \varphi)|_{\{-p\}}\}\,|\, \{\sigma_k(\tilde \varphi)|_{V_+}\}]\, \mu_u[\{\sigma_u(\tilde \varphi)|_{V}\}] \\
\end{align*}
\begin{align*}
=& \sum_{\tilde \varphi \in E^{V+ W\backslash\{-t\}}} \prod_{k=-t+1}^{u-1} \mu_k[\{\sigma_k(\tilde \varphi)|_{\{-p\}}\}\,|\, \{\sigma_k(\tilde \varphi)|_{V_+}\}]\, \mu_u[\{\sigma_u(\tilde \varphi)|_{V}\}] \\
=& ... = \sum_{\varphi \in E^{u+V}}\mu_u[\{\sigma_u(\varphi)|_{V}\}]=\sum_{\varphi \in E^{V}}g(\varphi)=1.
\end{align*}
The steps indicated by ... follow by induction in $|W|$. 
\end{proof}

We find the following results for our construction. 

\begin{lm}
\label{lm:imbetaindep}
Let $i \in W$ and $g \in \text{im }(\beta_W)$. Then $\alpha_{W,i}(g)=\alpha_{W,j}(g)$ for all $i,j \in W$. 
\end{lm}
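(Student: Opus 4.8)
The plan is to rewrite $\alpha_{W,i}(g)(\psi)$ in a closed form that makes no reference to $i$ at all. Since $g\in\text{im}(\beta_W)$, fix $\tilde g\in D(E^{V+W})$ with $g=\beta_W(\tilde g)$ (so that $g\in D^W_{dBe}$ by Lemma \ref{lm:betadbce} and $\alpha_{W,i}(g)$ is defined), let $\mu$ be the distribution of $\tilde g$, and let $\mu_j$ be the distribution of $g(j)=\beta_W(\tilde g)(j)$. Straight from the definition of $\beta_W$, each $\mu_j$ is a marginal of $\mu$: more generally $\mu_j[\{\sigma_j(\psi)|_S\}]=\mu[\{\psi|_{j+S}\}]$ for every $S\subseteq V$, which is exactly the content of the lemma just preceding the definition of $\alpha_{W,i}$. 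This lets me replace every local factor in the definition of $\alpha_{W,i}(g)(\psi)$ by a global one.

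I would first dispose of the degenerate patterns. If $\mu[\{\psi|_{j_0+V}\}]=0$ for some $j_0\in W$, then, using the monotonicity $B\subseteq A\Rightarrow\mu[\{\psi|_A\}]\le\mu[\{\psi|_B\}]$, the vanishing of this marginal propagates and forces at least one factor of the product defining $\alpha_{W,i}(g)(\psi)$ to vanish, so $\alpha_{W,i}(g)(\psi)=0$ for every $i$ and the claim is trivial there. On the complementary set, where $\mu[\{\psi|_{j+V}\}]>0$ for all $j\in W$, every conditioning event occurring below has positive mass, so no quotient is ambiguous.

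On that set I would use the identities $\{-p\}\cup V_+=V$ and $\{q\}\cup V_-=V$ to turn the conditional factors into ratios of marginals of $\mu$,
$$\mu_k[\{\sigma_k(\psi)|_{\{-p\}}\}\,|\,\{\sigma_k(\psi)|_{V_+}\}]=\frac{\mu[\{\psi|_{k+V}\}]}{\mu[\{\psi|_{k+V_+}\}]},\qquad \mu_l[\{\sigma_l(\psi)|_{\{q\}}\}\,|\,\{\sigma_l(\psi)|_{V_-}\}]=\frac{\mu[\{\psi|_{l+V}\}]}{\mu[\{\psi|_{l+V_-}\}]},$$
together with $\mu_i[\{\sigma_i(\psi)|_V\}]=\mu[\{\psi|_{i+V}\}]$. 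Substituting into the definition of $\alpha_{W,i}$ and then invoking the key re-indexing identity $k+V_+=(k+1)+V_-$ to align each denominator of the left product with one of the right product, the numerators collapse to $\prod_{j=-t}^{u}\mu[\{\psi|_{j+V}\}]$ and the denominators to $\prod_{j=-t+1}^{u}\mu[\{\psi|_{j+V_-}\}]$, so that
$$\alpha_{W,i}(g)(\psi)=\frac{\prod_{j=-t}^{u}\mu[\{\psi|_{j+V}\}]}{\prod_{j=-t+1}^{u}\mu[\{\psi|_{j+V_-}\}]}.$$
The right-hand side is visibly independent of $i$, which yields $\alpha_{W,i}(g)=\alpha_{W,j}(g)$ for all $i,j\in W$. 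If one prefers not to write out the closed form, one can instead compare $\alpha_{W,i}$ with $\alpha_{W,i+1}$ directly: the two products differ only in the factors attached to sites $i$ and $i+1$, the identity $k+V_+=(k+1)+V_-$ makes exactly those factors cancel, and induction on $|i-j|$ then finishes the argument.

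The only genuine obstacle I anticipate is bookkeeping: keeping the shifted index sets $k+V$, $k+V_+=(k+1)+V_-$, $l+V_-$ straight and verifying that numerators and denominators telescope, together with the mildly fussy (but routine) treatment of the patterns for which some marginal $\mu[\{\psi|_{j+V}\}]$ vanishes. Once all quantities have been expressed through $\mu$ via the preceding lemma, there is no analytic content left and everything is elementary algebra.
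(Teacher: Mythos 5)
Your proof is correct and rests on the same two facts as the paper's: that $\beta_W(\tilde g)(j)$ is the marginal of $\tilde g$ on $j+V$ (so every local conditional factor equals the corresponding global one), and the re-indexing identity $k+V_+=(k+1)+V_-$ that makes the denominators telescope. The paper packages this as a direct comparison of $\alpha_{W,i}$ with $\alpha_{W,i+1}$, shifting the single differing denominator factor, rather than deriving your $i$-free closed form, but the underlying computation is identical (and your explicit treatment of vanishing marginals is, if anything, more careful than the paper's).
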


{\em Proof}
We show that $\alpha_{W,i}(g)=\alpha_{W,i+1}(g)$ for all $i \in W\backslash \{-t\}$. An index shift to the left can be proven analogously. 

Note that $\mu_i[\{\sigma_i(\psi)|_{V}\}]=g(i)(\sigma_i(\psi)|_V)$ and that for $k \in \{-t,...,i-1\}$ and $l \in \{i+1,...,u\}$
\begin{align*}
\mu_k[\{\sigma_k(\psi)|_{\{-p\}}\}\,|\, \{\sigma_k(\psi)|_{V_+}\}]&=\frac{g(k)(\sigma_k(\psi)|_V)}{\sum_{\chi \in \left\{\sigma_k(\psi)|^V_{V_+}\right\}}g(k)(\chi)}, \\
\mu_l[\{\sigma_l(\psi)|_{\{q\}}\}\,|\, \{\sigma_l(\psi)|_{V_-}\}]&=\frac{g(l)(\sigma_l(\psi)|_V)}{\sum_{\chi \in \left\{\sigma_l(\psi)|^V_{V_-}\right\}}g(l)(\chi)}. 
\end{align*}
Therefore $\alpha_{W,i}(g)(\psi)$ and $\alpha_{W,i+1}(g)(\psi)$ have the same numerator and only differ in the denominator. It is enough to show that a factor in the denominator may be shifted one step to the right: Let $i \in W\backslash \{u\}$ and $g=\beta_W(\tilde g)$ for $\tilde g \in D(E^{V+W})$. Then 
\begin{align*}
&\sum_{\chi \in \left\{\sigma_i(\psi)|^V_{V_+}\right\}}\beta_W(\tilde g)(i)(\chi)&=&\sum_{\chi \in \left\{\sigma_i(\psi)|^V_{V_+}\right\}}\, \, \sum_{\varphi \in \left\{\sigma_{-i}(\chi)|^{V+W}_{i+V}\right\}}\tilde g(\varphi) \\
=&\sum_{\varphi \in \left\{\psi|^{V+W}_{i+V_+}\right\}} \tilde g(\varphi) &=&\sum_{\varphi \in \left\{\psi|^{V+W}_{i+1+V_-}\right\}} \tilde g(\varphi) \\
=&\sum_{\chi \in \left\{\sigma_{i+1}(\psi)|^V_{V_-}\right\}}\, \, \sum_{\varphi \in \left\{\sigma_{-(i+1)}(\chi)|^{V+W}_{i+1+V}\right\}}\tilde g(\varphi) &=&\sum_{\chi \in \left\{\sigma_{i+1}(\psi)|^V_{V_-}\right\}}\beta_W(\tilde g)(i+1)(\chi) \qquad \endproof
\end{align*}

\begin{thm}
\label{thm:betaalpha}
Let $g \in \text{im}(\beta_W)$. Then $\beta_W\alpha_{W,i}(g)=g$ for all $i \in W$. 
\end{thm}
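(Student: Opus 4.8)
The plan is to reduce to a single anchor site via the two lemmas just proved and then establish the required marginalization identity by exactly the telescoping mechanism used above to show $\alpha_{W,i}(g)\in D(E^{V+W})$, only stopped at a window instead of carried all the way to the empty window.

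First, since $g\in\mathrm{im}(\beta_W)$, Lemma \ref{lm:betadbce} gives $g\in D^W_{dBe}$, so every $\alpha_{W,i}(g)$ is defined, and by Lemma \ref{lm:imbetaindep} they all coincide. Hence it is enough to prove, for each fixed $j\in W$, that $\beta_W(\alpha_{W,j}(g))(j)=g(j)$; that is, choosing the anchor to be the very site $j$ whose marginal is being computed, one must show
$$\sum_{\chi\in E^{V+W},\ \chi|_{j+V}=\sigma_{-j}(\psi)}\alpha_{W,j}(g)(\chi)=g(j)(\psi)\qquad\text{for all }\psi\in E^V.$$

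Next I would unwind $\alpha_{W,j}(g)(\chi)$ on the constraint set. The middle factor is $\mu_j[\{\sigma_j(\chi)|_V\}]=g(j)(\sigma_j(\chi)|_V)=g(j)(\psi)$, constant over the sum, so it pulls out and the claim reduces to $\sum_{\chi|_{j+V}=\sigma_{-j}(\psi)}\prod_{k=-t}^{j-1}\mu_k[\cdots]\prod_{l=j+1}^{u}\mu_l[\cdots]=1$. Now peel off an outermost coordinate: if $j<u$, the coordinate $\chi(q+u)$ appears only in the single factor with index $l=u$ (every other factor, the middle factor, and the constraint involve coordinates of index at most $(u-1)+q$), and summing $\mu_u[\{\sigma_u(\chi)|_{\{q\}}\}\mid\{\sigma_u(\chi)|_{V_-}\}]$ over the value of $\chi(q+u)$ equals $1$ because it is a conditional distribution on $E^V$. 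What remains is the identical expression with $W$ replaced by $W\setminus\{u\}$, anchor still $j$. Symmetrically, if $j>-t$ one sums out $\chi(-p-t)$ against the $k=-t$ factor. Iterating, $W$ shrinks — always retaining $j$ — down to $W=\{j\}$, where $V+W=j+V$, the constraint forces $\chi=\sigma_{-j}(\psi)$, and $\alpha_{\{j\},j}(g)(\sigma_{-j}(\psi))=g(j)(\sigma_j\sigma_{-j}(\psi))=g(j)(\psi)$, closing the induction on $|W|$.

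The main obstacle I expect is bookkeeping: keeping the shifts $\sigma_\bullet$, the windows $V$, $V_+$, $V_-$ and the index ranges consistent while removing coordinates, and verifying that exactly one factor depends on the coordinate being summed out. The one genuine subtlety is that each conditional-probability factor must be well defined (positive denominator) along the telescoping; this is precisely where $g\in D^W_{dBe}$ enters — extendability of the supports ensures that whenever the surviving product of factors is nonzero the conditioning events carry positive mass, so the identity $\sum(\text{conditional})=1$ is legitimate. Apart from that, the argument is the same computation already carried out for the normalization of $\alpha_{W,i}(g)$.
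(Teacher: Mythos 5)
Your proof is correct and follows essentially the same route as the paper: use Lemma \ref{lm:imbetaindep} to move the anchor to the site whose marginal is being computed (the paper takes $j=u$ without loss of generality), pull out the unconditioned middle factor as $g(j)(\psi)$, and then sum out the remaining coordinates one at a time against the conditional factors by induction on $|W|$. Your explicit remark that extendability is what keeps the conditioning events of positive mass along the telescoping is a point the paper leaves implicit, but it is the same argument.
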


\begin{proof}
Let $i \in W$ and $g \in \text{im}(\beta_W)$. We prove $\beta_W\alpha_{W,i}(g)(j)=g(j)$ without loss of generality only for $j=u$. 

With Lm. \ref{lm:imbetaindep} and because a conditional distribution is a distribution as well, for $\psi \in E^V$
\begin{align*}
\beta_W\alpha_{W,i}(g)(u)(\psi)=&\beta_W\alpha_{W,u}(g)(u)(\psi) = \sum_{\varphi \in \left\{\sigma_{-u}(\psi)|^{V+W}_{u+V}\right\}}\alpha_{W,u} (g) (\varphi)\\
=& \sum_{\tilde \varphi \in \left\{\sigma_{-u}(\psi)|^{V+W\backslash \{-t\}}_{u+V}\right\}} \, \,\sum_{\varphi \in \left\{\tilde \varphi|^{V+W}_{V+W\backslash \{-t\}}\right\}} \mu_{-t}[\{\sigma_{-t}(\varphi)|_{\{-p\}}\}\,|\, \{\sigma_{-t}(\tilde \varphi)|_{V_+}\}] \\
& \prod_{k=-t+1}^{u-1} \mu_k[\{\sigma_k(\tilde \varphi)|_{\{-p\}}\}\,|\, \{\sigma_k(\tilde \varphi)|_{V_+}\}]\, \mu_u[\{\sigma_u(\tilde \varphi)|_{V}\}]\, \\
=& \sum_{\tilde \varphi \in \left\{\sigma_{-u}(\psi)|^{V+W\backslash \{-t\}}_{u+V}\right\}} \, \, \prod_{k=-t+1}^{u-1} \mu_k[\{\sigma_k(\tilde \varphi)|_{\{-p\}}\}\,|\, \{\sigma_k(\tilde \varphi)|_{V_+}\}]\, \mu_u[\{\sigma_u(\tilde \varphi)|_{V}\}]\, \\
=& ... =g(u)(\psi). 
\end{align*}
The last steps follow by induction in $|W|$.
\end{proof} 

Now we focus on global densities that are preserved under $\alpha_{W,i}\beta_W$ for all $i \in W$. We will see that they enable an algebraic interpretation of $\beta_W$. Our choice of $\alpha_{W,i}$ is further justified thereby. 

\begin{defn}
$g \in D(E^{V+W})$ is called $V$-factorizable, if $g=\alpha_{W,i}\beta_W(g)$ for all $i \in W$. 
\end{defn}

An example of a $\{0,1\}$-factorizable density is $\tilde g$ in Ex. \ref{ex:beta}. $g$ in the same example however has correlations over $2$ sites and is not $\{0,1\}$-factorizable: a state only has positive probability if the local states at sites $0$ and $2$ differ. The correlations are ignored by $\beta_W$. 




\begin{lm}
\label{lm:imbetaVfact}
Let $i \in W$ and $g \in \text{im }(\beta_W)$. Then $\alpha_{W,i}(g)$ is V-factorizable. 
\end{lm}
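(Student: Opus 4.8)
The plan is to unfold the definition of $V$-factorizability and reduce it to the two facts already established for elements of $\text{im}(\beta_W)$, namely Theorem \ref{thm:betaalpha} and Lemma \ref{lm:imbetaindep}. By definition, a density $h \in D(E^{V+W})$ is $V$-factorizable precisely when $\alpha_{W,j}\beta_W(h)=h$ for every $j \in W$. So I would set $h:=\alpha_{W,i}(g)$, fix an arbitrary $j \in W$, and compute $\alpha_{W,j}\beta_W\alpha_{W,i}(g)$, aiming to show it equals $\alpha_{W,i}(g)$.

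First I would apply Theorem \ref{thm:betaalpha}: since $g \in \text{im}(\beta_W)$, it gives $\beta_W\alpha_{W,i}(g)=g$, and therefore $\alpha_{W,j}\beta_W\alpha_{W,i}(g)=\alpha_{W,j}(g)$. This step implicitly uses that $\alpha_{W,i}(g)$ is a genuine density on $E^{V+W}$, i.e. lies in the domain of $\beta_W$, which is exactly the content of the preceding lemma stating $\alpha_{W,i}(g)\in D(E^{V+W})$ for $g \in D^W_{dBe}$ (and $\text{im}(\beta_W)\subseteq D^W_{dBe}$ by Lemma \ref{lm:betadbce}).

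Second I would invoke Lemma \ref{lm:imbetaindep}: because $g \in \text{im}(\beta_W)$, the family $\{\alpha_{W,k}(g)\}_{k\in W}$ does not depend on the index, so $\alpha_{W,j}(g)=\alpha_{W,i}(g)$. Combining the two steps yields $\alpha_{W,j}\beta_W\alpha_{W,i}(g)=\alpha_{W,i}(g)$ for every $j \in W$, which is precisely the statement that $\alpha_{W,i}(g)$ is $V$-factorizable.

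I do not expect any real obstacle here; the substance has already been absorbed into Theorem \ref{thm:betaalpha} (the right-inverse property $\beta_W\alpha_{W,i}=\mathrm{id}$ on $\text{im}(\beta_W)$) and Lemma \ref{lm:imbetaindep} (index-independence on $\text{im}(\beta_W)$), and the present lemma is a one-line composition of the two. The only point worth a moment's attention is the bookkeeping of domains: one must check that the objects handed to Theorem \ref{thm:betaalpha} and Lemma \ref{lm:imbetaindep} genuinely lie in $\text{im}(\beta_W)$, which holds because those results are applied to $g$ itself — the hypothesis — rather than to $\alpha_{W,i}(g)$, and that $\beta_W$ is being evaluated on an element of $D(E^{V+W})$, which is the normalization lemma quoted above.
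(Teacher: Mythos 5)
Your proof is correct and is essentially identical to the paper's: both reduce the claim to the chain $\alpha_{W,j}\beta_W\alpha_{W,i}(g)=\alpha_{W,j}(g)=\alpha_{W,i}(g)$, using Theorem \ref{thm:betaalpha} for the first equality and Lemma \ref{lm:imbetaindep} for the second. The extra remarks on domain bookkeeping are sound but not needed beyond what the paper already records.
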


\begin{proof}
Let $g \in \text{im }(\beta_W)$ and $i,j \in W$. Then by Lm. \ref{lm:imbetaindep} and Thm. \ref{thm:betaalpha}
\begin{align*}
\alpha_{W,i}(g)=\alpha_{W,j}(g)=\alpha_{W,j}\beta_W \alpha_{W,i}(g),
\end{align*}
and hence $\alpha_{W,i}(g)$ is V-factorizable. 
\end{proof}

\begin{thm}
\label{thm:representative}
For all $g \in D(E^{V+W})$ there is a $V$-factorizable $\tilde g \in D(E^{V+W})$ such that $\beta_W(\tilde g)=\beta_W(g)$. 
\end{thm}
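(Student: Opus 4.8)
The plan is to produce the required $V$-factorizable representative simply by setting $\tilde g := \alpha_{W,i}\beta_W(g)$ for any fixed choice of $i \in W$, and then to verify the two claims: that $\tilde g$ is $V$-factorizable, and that $\beta_W(\tilde g) = \beta_W(g)$. The point is that $\beta_W(g)$ lies in $\mathrm{im}(\beta_W)$ by construction, so the earlier lemmas about the behaviour of $\alpha_{W,i}$ on $\mathrm{im}(\beta_W)$ apply directly.

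First I would observe that $\tilde g = \alpha_{W,i}(\beta_W(g))$ with $\beta_W(g) \in \mathrm{im}(\beta_W) \subseteq D^W_{dBe}$ (the inclusion being Lemma \ref{lm:betadbce}), so $\alpha_{W,i}$ is defined on it and $\tilde g \in D(E^{V+W})$ by the normalization lemma. Then $V$-factorizability of $\tilde g$ is immediate from Lemma \ref{lm:imbetaVfact}: applying $\alpha_{W,i}$ to an element of $\mathrm{im}(\beta_W)$ yields a $V$-factorizable density. Second, to check $\beta_W(\tilde g) = \beta_W(g)$, I would apply Theorem \ref{thm:betaalpha} with the element $\beta_W(g) \in \mathrm{im}(\beta_W)$ in the role of ``$g$'': it gives $\beta_W \alpha_{W,i}(\beta_W(g)) = \beta_W(g)$, which is exactly $\beta_W(\tilde g) = \beta_W(g)$.

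So the argument is essentially a one-line composition of three previously established facts (Lemma \ref{lm:betadbce}, Lemma \ref{lm:imbetaVfact}, Theorem \ref{thm:betaalpha}), together with the defining property of $V$-factorizability. There is no real obstacle; the only thing to be careful about is bookkeeping — making sure that the ``$g$'' appearing in the hypotheses of those earlier results is consistently taken to be $\beta_W(g)$ (an element of the image of $\beta_W$), not the original $g$, and that the choice of $i \in W$ is fixed but arbitrary throughout so that the conclusion holds for the stated quantifier. If one wants the representative to be canonical, one may note via Lemma \ref{lm:imbetaindep} that $\alpha_{W,i}\beta_W(g)$ does not in fact depend on $i$ for $g$ in this situation, though that is not needed for the statement as phrased.
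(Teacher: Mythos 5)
Your proposal is correct and is essentially identical to the paper's own proof: the paper likewise sets $\tilde g=\alpha_{W,u}\beta_W(g)$, invokes Lemma \ref{lm:imbetaVfact} for $V$-factorizability, and Theorem \ref{thm:betaalpha} for $\beta_W(\tilde g)=\beta_W(g)$. Your extra care about $\beta_W(g)\in\mathrm{im}(\beta_W)\subseteq D^W_{dBe}$ and the independence of the choice of $i$ matches the paper's remarks.
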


\begin{proof}
Let $g \in D(E^{V+W})$ and choose $\tilde g=\alpha_{W,u} \beta_W(g)$. Then $\tilde g$ is $V$-factorizable by Lm. \ref{lm:imbetaVfact}, and the definition does not depend on the site $u$. Furthermore $\beta_W(\tilde g)=\beta_W\alpha_{W,u} \beta_W(g)=\beta_W(g)$ by Thm. \ref{thm:betaalpha}. 
\end{proof}

$\beta_W$ induces equivalence classes on $D(E^{V+W})$ by collecting all global densities with the same image in one class. According to Thm. \ref{thm:representative} each equivalence class contains at least one $V$-factorizable state. Moreover, we know that there is exactly one such state, because $\beta_W$ is injective on these states by definition. It is given as the image under $\alpha_{W,i}\beta_W$ of any state in the class for any $i \in W$. Therefore it is possible to choose the $V$-factorizable states as the representatives of the equivalence classes. These representatives are preserved under $\alpha_{W,i} \beta_W$ for any $i \in W$. Ex. \ref{ex:beta} provides an example: $g$ and $\tilde g$ are in the same equivalence class, and $\tilde g=\alpha_{W,0}\beta_W(g)$ is the unique $V$-factorizable representative of the class. 

However, in general $\alpha_{W,i}(g)$ is not $V$-factorizable if $g \in D^W_{dBe}\backslash \text{im }(\beta_W)$. There is a degree of freedom in how to map a density collection to a global density on this set. We choose the arithmetic mean over all $\alpha_{W,i}$, where $i \in W$. Note that for $g \in \text{im }(\beta_W)$ the definition then coincides with any $\alpha_{W,i}$. 

\begin{defn}
$\alpha_W: D_{dBe}^W \to D(E^{V+W})$ is given by $\alpha_W(g)=\frac{1}{|W|} \sum_{i \in W}\alpha_{W,i}(g)$. 
\end{defn}

It is clear that $\alpha_W(g)$ is a density by similar reasoning as for $\alpha_{W,i}(g)$.

\subsection{General Cellular Probabilistic Automata}

\label{subsec:generalCPA}

With the de Bruijn calculus at hand we can now generalize the definition of CPA to general $V$. 

\begin{defn} As before let $I=\{1,...,m\}$, $U=\{-r,...,s\}$, $V=\{-p,...,q\}$ for $p,q,r,s,m \in \N_0$ with $m \geq 1, 1+p+q+r+s\leq m$ and $K=\{1,...,r\}\cup \{m-s+1,...,m\}$. We now set $i_l=1+p+r$, $i_r=m-q-s$ and $\tilde I = \{i_l,...,i_r\}$. 
\begin{compactitem}
\item [i)] We set $\hat \alpha: D_{dBe}^{\tilde I} \to D_{BC}(E^I), \,g \mapsto \hat \alpha(g)$ with
\begin{align*}
\hat \alpha(g)(\psi) &= \left\{\begin{array}{ll}
\alpha_{\tilde I}(g)(\psi|_{\{1+r,...,m-s\}}) & \mbox{ if  } \psi |_K=\rho\\
0 & \mbox{ else } \\
\end{array}\right. 
\end{align*}
\item [ii)] and $\hat \beta: D_{BC}(E^I) \to D_{dBe}^{\tilde I}, \, g \mapsto \hat \beta (g)$ with 
$$\hat \beta (g)(i)(\psi)=\sum_{\overset{\chi \in E^I \text { s. t. }} {\chi |_{i+V}=\sigma_{-i}(\psi)}}g(\chi).$$
\end{compactitem}
\end{defn}

\begin{defn}
\label{defn:CPA}
A cellular probabilistic automaton (CPA) is a tuple $(I, U, V, E, f_0)$, where for $m,p,q,r,s \in \N_0$ with $m\geq 1$ and $1+p+q+r+s\leq m$
\begin{compactitem}
\item [i)] $I=\{1,...,m\}$ is a finite grid, 
\item [ii)] $U=\{-r,...,s\}$ is the neighborhood, 
\item [iii)] $V=\{-p,...,q\}$ gives rise to de Bruijn patterns, 
\item [iv)] $E$ is a finite set of local states
\item [v)] and $f_0 : E^{U+V} \rightarrow D(E^{V})$ is the local function. 
\end{compactitem}

With the boundary conditions $\rho \in E^K$ for $K=\{1,...,r\}\cup \{m-s+1,...,m\}$ and the definitions $i_l=1+p+r$, $i_r=m-q-s$, $\tilde I = \{i_l,...,i_r\}$, $\tilde U(i)=\{-\underline u(i),...,\overline u(i)\}$ and $\underline u,\overline u:\tilde I \to U$ with 
\begin{align*}
\underline u(i) &= \left\{\begin{array}{ll}
i-i_l & \mbox{ if  } i \in \{i_l,...,i_l+r-1\}\\
r & \mbox{ else } \\
\end{array}\right., \\
\overline u(i) &= \left\{\begin{array}{ll}
i_r-i & \mbox{ if  } i \in \{i_r-s+1,...,i_r\}\\
s & \mbox{ else } \\
\end{array}\right., 
\end{align*}
the global function is given by
\begin{equation*}
f: D_{dBe}^{\tilde I} \rightarrow D_{dBe}^{\tilde I}, \quad g \mapsto f(g), 
\end{equation*}
$$f(g)(i)(\psi) = \sum_{\overset{\varphi \in E^{U+V} \text{ s.t. } \varphi(k-i)=\rho(k)}{\text{for }k \in K\cap i+U+V}}\alpha_{\tilde U(i)}(\sigma_i(g)|_{\tilde U(i)})(\varphi|_{V+\tilde U(i)})\cdot f_0(\varphi)(\psi).$$
The trajectory starting with  $g^0 \in (D(E^{V}))^{\tilde I}$ is given by the sequence $(g^n)_{n \in \N}$, where $g^n=f(g^{n-1})$ for $n \in \N ^+$.
\end{defn}

See Fig. \ref{fig:CPAdef} for a sketch of how the CPA works on general patterns. Remark that $f_0$ is not arbitrary but connected to a dynamical system with the locality property. By investigating this relation we can assure that the global function is well-defined. 

\begin{figure}
\centering
\includegraphics[width=8cm]{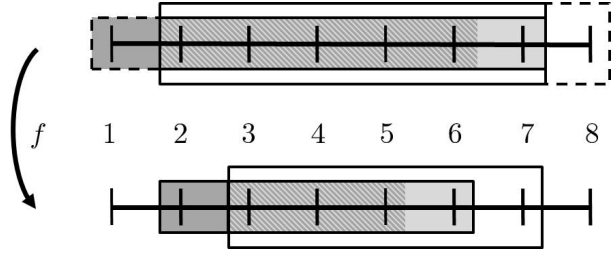}
\caption{An example of a CPA with $I=\{1,...,8\}$, $U=\{-1,0,1\}$ and $V=\{-2,...,2\}$. The global function considers patterns located at $\tilde I=\{4,5\}$ and sketched in the image (lower part) by rectangles of small and large height, respectively. The corresponding preimage patterns (upper part) are larger due to the neighborhood, and their probability of occurence is influenced by boundary conditions (dashed parts). From an implementational point of view, $V$ may be constructed from $\tilde V=\{-2,...,1\}$ and $W=\{0,1\}$, see Section \ref{subsec:Implementation}: Focus on a pattern at site $4$. The transition probability from a preimage pattern is calculated from the information about two subtransitions between light and dark grey subpatterns. }
\label{fig:CPAdef}
\end{figure}

\begin{lm}
$f(D_{dBe}^{\tilde I}) \subseteq D_{dBe}^{\tilde I}$. 
\end{lm}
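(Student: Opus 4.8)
The claim is that the global function $f$ maps extendable de Bruijn densities to extendable de Bruijn densities. There are two things to check: first, that each $f(g)(i)$ is itself a density on $E^V$ (nonnegativity is immediate from the product structure and nonnegativity of $\alpha_{\tilde U(i)}$ and $f_0$; the real content is normalization), and second, that the support tuple $\times_{i\in\tilde I}\operatorname{supp} f(g)(i)$ is an extendable de Bruijn state in $X^{\tilde I}_{dBe}$. The plan is to establish these two facts separately.

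\emph{Normalization.} Fix $g\in D^{\tilde I}_{dBe}$ and $i\in\tilde I$. I would sum $f(g)(i)(\psi)$ over $\psi\in E^V$. Pull the sum inside: $\sum_{\psi\in E^V} f(g)(i)(\psi) = \sum_{\varphi} \bigl[\cdots\bigr]\cdot\sum_{\psi\in E^V} f_0(\varphi)(\psi)$, where the outer sum is over the constrained $\varphi\in E^{U+V}$. Since $f_0(\varphi)\in D(E^V)$ by Definition \ref{defn:CPA}(v), the inner sum is $1$. It remains to show that $\sum_{\varphi\in E^{U+V},\ \varphi(k-i)=\rho(k)} \alpha_{\tilde U(i)}(\sigma_i(g)|_{\tilde U(i)})(\varphi|_{V+\tilde U(i)}) = 1$. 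Here I would use that $\alpha_{\tilde U(i)}(\cdot)$ produces a density on $E^{V+\tilde U(i)}$ (the lemma preceding Theorem \ref{thm:representative}), so summing it over all of $E^{V+\tilde U(i)}$ gives $1$; the subtlety is that the boundary constraint $\varphi(k-i)=\rho(k)$ for $k\in K\cap(i+U+V)$ only pins down coordinates that lie \emph{outside} $V+\tilde U(i)$ — by the definitions of $i_l,i_r,\tilde I$ and of $\tilde U(i)$, the window $V+\tilde U(i)$ sits strictly inside $\{1-i,\dots,m-i\}$ away from $K-i$, so these are genuinely free coordinates summed out trivially, while the pinned coordinates contribute a factor $1$ each. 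I would need $\sigma_i(g)|_{\tilde U(i)}\in D^{\tilde U(i)}_{dBe}$ so that $\alpha_{\tilde U(i)}$ is even defined; this follows because $g\in D^{\tilde I}_{dBe}$ restricts to an extendable de Bruijn density on any sub-grid of $\tilde I$ (a pattern extendable over the whole grid is a fortiori extendable over a sub-window).

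\emph{Extendability of the support.} This is the part I expect to be the main obstacle. I would argue that $\operatorname{supp} f(g)(i)$ is determined by $\operatorname{supp} f_0$ together with the local function's origin in a dynamical system with the locality property: the reachable patterns at neighbouring sites $i,i+1$ overlap consistently because $f_0$ was built (Definition \ref{def:cpafromode}) from $\Phi^{-\tau}$ applied to coding domains, and the flow's locality property forces compatibility of images on overlapping windows. Concretely, if $\psi\in\operatorname{supp} f(g)(i)$ and $\psi'\in\operatorname{supp} f(g)(i+1)$, I want to glue them along $E^{(i+V)\cap(i+1+V)}$; the existence of a common global extension should be traced back to the fact that the underlying $\Phi^\tau$ acts on genuine global states, so any collection of locally-reachable patterns coming from the \emph{same} input $g$ lifts to a global image. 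The cleanest route is probably: take the $V$-factorizable (or $\alpha_{\tilde U(i)}$-reconstructed) preimage, note it has a global extension since $g$ is extendable, apply the honest global map $P_B$ (or rather the structure behind $f_0$) to that extension, and observe that $\hat\beta$ of the result dominates $f(g)$ on supports. I would phrase this as: $\operatorname{supp} f(g)(i)$ equals the projection to site $i$ of $\operatorname{supp}$ of some genuine global density, which is automatically extendable by Lemma \ref{lm:betadbce}. If a fully clean reduction to Lemma \ref{lm:betadbce} is not available, the fallback is a direct combinatorial gluing argument using the de Bruijn-graph structure of $E^V$-patterns, verifying the overlap condition site by site across $\tilde I$ using that $\tilde U(i)$ and $\tilde U(i+1)$ share the cell between $i$ and $i+1$.
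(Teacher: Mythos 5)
Your decomposition into two claims --- each $f(g)(i)$ is a density, and the product of supports is an extendable de Bruijn state --- is the right one, and your normalization half is sound (the paper leaves it implicit): the boundary constraint pins exactly the coordinates of $\varphi$ outside $V+\tilde U(i)$, so the constrained sum over $\varphi\in E^{U+V}$ collapses to the sum of the density $\alpha_{\tilde U(i)}(\sigma_i(g)|_{\tilde U(i)})$ over $E^{V+\tilde U(i)}$, which is $1$; and the restriction of an extendable de Bruijn density to the sub-window $i+\tilde U(i)\subseteq \tilde I$ is again extendable, so $\alpha_{\tilde U(i)}$ is defined. This part you could finish as written.

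The gap is in the extendability half, which is the entire content of the paper's proof. Your primary route --- identify $\text{supp}\,f(g)(i)$ with the site-$i$ projection of the support of a genuine global density and invoke Lemma \ref{lm:betadbce} --- does not go through. The cover lemma in Section \ref{subsec:Locality} gives $\text{supp}(\hat\beta P_B(\cdot)(i))\subseteq \text{supp}(f(\cdot)(i))$, i.e.\ the CPA support is the \emph{larger} one, and Example \ref{ex:locerror} shows the CPA admits combinations of local transitions that are not jointly realizable under $P_B$; so "$\hat\beta$ of the result dominates $f(g)$ on supports" is the wrong direction, and you would need exact sitewise equality of supports for the reduction to work. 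Moreover, even sitewise domination would not suffice: if $\times_i B_i$ is extendable and $A_i\subseteq B_i$, the product $\times_i A_i$ need not be extendable, since the guaranteed global extension of a pattern in $A_i$ only projects into the larger sets $B_j$. Your fallback --- gluing overlapping patterns site by site --- is the correct direction, but it cannot be purely combinatorial on the de Bruijn graph. The step that actually requires proof is: given $\psi\in\text{supp}\,f(g)(i)$ witnessed by $\varphi$ with $f_0(\varphi)(\psi)>0$, and given the one-site extension $\tilde\varphi$ of $\varphi$ supplied by extendability of the input $g$, produce $\tilde\psi$ with $\tilde\psi|_{V_-}=\sigma_1(\psi|_{V_+})$ and $f_0(\tilde\varphi)(\tilde\psi)>0$. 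The paper gets this by returning to the continuous system: $\Omega_{\sigma_{-i}(\varphi)}\cap\Phi^{-\tau}(\Omega_{\sigma_{-i}(\psi)})$ has positive measure and, by the locality property, constrains only the coordinates at sites $i+U+V$, so intersecting with the coding constraint of $\tilde\varphi$ at the single new site $i+1+q+s$ preserves positive measure; since the coding domains at the new image site partition $\Omega$, some choice of $\tilde\psi(q)$ then keeps the preimage of positive measure, i.e.\ $f_0(\tilde\varphi)(\tilde\psi)>0$. Without this measure-theoretic independence argument (or an equivalent substitute) the extendability of the image support is not established.
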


\begin{proof}
Because $D_{dBe}^{\tilde I}=(D(E^V))^{\tilde I}$ for $|V|=1$, the statement is trivial in this case. So we focus on $|V|>1$. We show that without loss of generality any pattern in the support of any site in the image can be extended to the right by a pattern in the support of the neighboring site. 

Let $g \in D_{dBe}^{\tilde I}, i \in \{i_l,...,i_r-1\}$ and $\psi \in \text{supp }f(g)(i)$. By the construction of $f$ and $\alpha_{\tilde U(i)}$ we know that there is $\varphi \in E^{U+V}$ such that $\sigma_{j}(\varphi)|_{V} \in \text{supp}(g)(i+j)$ for all $j \in U$ and $f_0(\varphi)(\psi)>0$. Because of the extension property of the preimage $g$ we can find $\tilde \varphi \in E^{U+V}$ such that $\sigma_j(\tilde \varphi)|_{V}=\sigma_{j+1}(\varphi)|_V \in \text{supp }(g)(i+1+j)$ for all $j \in \{-r,...,s-1\}$ and $\sigma_s(\tilde \varphi)|_{V} \in \text{supp }(g)(i+1+s)$. 

This enables us to find $\tilde \psi \in \text{supp }f(g)(i+1)$ such that $\tilde \psi|_{V_-}=\sigma_1(\psi|_{V_+})$, as we will show in the following. Hence the pattern $\psi$ may be extended to the right by $\tilde \psi$, and the proof is complete. 

Since $f_0(\varphi)(\psi)>0$ and the partition is uniform, there is an $\epsilon$-ball $B_{\epsilon}$ with respect to the 2-norm in $\R^{mn}$, $\epsilon >0,$ such that 
$$B_{\epsilon}\subseteq \Omega_{\sigma_{-i}(\varphi)} \cap \Phi^{-\tau}(\Omega_{\sigma_{-i}(\psi)}).$$ 
Because the set is just restricted on sites $i+U+V$ due to the locality property, we may independently restrict at site $i+1+q+s$ and can still find $\epsilon '>0$ with 
\begin{align*}
B_{\epsilon '}&\subseteq \Omega_{\sigma_{-i}(\varphi)} \cap \Omega_{\sigma_{-(i+1)}(\tilde \varphi(q+s))} \cap \Phi^{-\tau}(\Omega_{\sigma_{-i}(\psi)})\\
&\subseteq \Omega_{\sigma_{-i}(\varphi|_{V_++U})} \cap \Omega_{\sigma_{-(i+1)}(\tilde \varphi|_{q+s})} \cap \Phi^{-\tau}(\Omega_{\sigma_{-i}(\psi|_{V_+})})\\ 
&= \Omega_{\sigma_{-(i+1)}(\tilde \varphi)} \cap \Phi^{-\tau}(\Omega_{\sigma_{-i}(\psi|_{V_+})}).
\end{align*}
In the second line we have again used the locality property, and the equality sign holds due to $\sigma_{-i}(\varphi|_{V_++U})=\sigma_{-(i+1)}(\tilde \varphi|_{V_-+U})$. We now define $\tilde \psi \in E^V$ by $\tilde \psi|_{V_-}=\sigma_1(\psi|_{V_+})$ and choose $\tilde \psi (q)$ such that there is $\epsilon ''>0$ with 
$$B_{\epsilon ''}\subseteq B_{\epsilon '} \cap \Phi^{-\tau}(\Omega_{\sigma_{-(i+1)}(\psi(q))}).$$ 
Therefore  
$$B_{\epsilon ''}\subseteq \Omega_{\sigma_{-(i+1)}(\tilde \varphi)} \cap \Phi^{-\tau}(\Omega_{\sigma_{-(i+1)}(\tilde \psi)}),$$
$f_0(\tilde \varphi)(\tilde \psi)>0$, and $\tilde \psi \in \text{supp }f(g)(i+1)$. 
\end{proof}

We denote the case of $i_l=i_r$ with $V_{\max}$ and find that $\hat \alpha \hat \beta (g)= g$ for all $g \in {D_{BC}(E^I)}$ and $\hat \beta \hat \alpha (g)=g$ for all $g \in (D(E ^{V}))^{\tilde I}$. Furthermore $\tilde U(\tilde I)=\{0\}$ in this case, and it can be calculated for $g \in D_{dBe}^{\tilde I}$ and $\varphi \in E^{U+V}$ that 
$$\alpha_{\{0\}}(\sigma_{i_l}(g))(\varphi|_V)=g(i_l)(\varphi|_V).$$
For $\psi \in E^V$ then
$$f(g)(i_l)(\psi)=\sum g(i_l)(\varphi|_V) \cdot f_0(\varphi)(\psi),$$
where the sum is taken over all $\varphi \in E^{U+V}$ such that the boundary conditions are fulfilled. For $V_{\max}$ the evolution of the global density is calculated directly, and locality is completely omitted. 



\section{Consistency}

\label{sec:Consistency}

The goal is to prove in Section \ref{subsec:consistencythm} that time evolution of probability densities under CPA can approximate time evolution under FPO arbitrarily close. We prepare this consistency result by investigating first the two features state space discretization and locality in more depth in Sections \ref{subsec:statespacedis} and \ref{subsec:Locality}, respectively.

\subsection{State Space Discretization}
\label{subsec:statespacedis}

In this section we just compare the discretized FPO to the real FPO without considering locality. There are many ways to study distances between probability measures \cite{probmetricsreview}. In our density based formulation we use the $\mathcal{L}^1$-norm for probability densities which leads to the notion of strong convergence in the literature \cite{Lasota}. It is well-known that in this norm the discretized operator converges pointwise to the FPO for increasing state space resolution in the case of Lipschitz continuous input densities \cite{Koltai}. But because the image under $RP_{\Phi^\tau}$ is in general not continuous, for iteration we need to generalize the result to $\mathcal{L}^1$-functions. We start with some necessary tools. 

A subset $M\subseteq X$ of a metric space $(X,d)$ is called totally bounded, if for every $\epsilon >0$ there exist $n \in \N$ and $x_1,...,x_n \in M$ such that 
$$M\subseteq\bigcup_{i=1}^n\{x \in X \,|\, d(x,x_i)<\epsilon\}.$$
Totally bounded subsets of $\mathcal{L}^p(\R^n)$ can be alternatively characterized in a functional analytical sense by the theorem of Kolmogorov-Riesz \cite{Hanche-OlsenHolden, Wloka}. In the following $|.|$ denotes the 2-norm in $\R^n$. 

\begin{thm}
\label{thm:Kolmogorov-Riesz}
Kolmogorov-Riesz

A set $M\subset \mathcal{L}^p(\R^n)$, $1\leq p<\infty$ is totally bounded if and only if the following criteria are fulfilled: 
\begin{compactitem}
\item [i)] $M$ is bounded in $\mathcal{L}^p(\R^n)$, i.e. $\sup_{g \in M} \|g\|_p<\infty$.
\item [ii)] For every $\epsilon>0$ there is some $R$ so that for every $g \in M$ 
$$\int_{|v|>R} |g(v)|^pdv < \epsilon^p.$$
\item [iii)] For every $\epsilon>0$ there is $\delta>0$ so that for every $g \in M$ and $w \in \R^n$ with $|w|<\delta$ 
$$\int_{\R^n}|g(v+w)-g(v)|^pdv<\epsilon^p.$$
\end{compactitem}
\end{thm}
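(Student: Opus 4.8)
The plan is to establish both implications; the direction ``(i)--(iii) $\Rightarrow$ totally bounded'' is the substantial one. Throughout I would fix a mollifier $\phi \in C^\infty_c(\R^n)$ with $\phi \geq 0$, $\text{supp}\,\phi \subseteq \{|v|<1\}$ and $\int_{\R^n}\phi\,dv = 1$, set $\phi_r(v)=r^{-n}\phi(v/r)$, and write $\tau_w g(v)=g(v+w)$ for translation. The one estimate that drives everything is, via Minkowski's integral inequality,
$$\|g*\phi_r - g\|_p = \left\| \int_{\R^n}\phi_r(w)\,(\tau_{-w}g-g)\,dw \right\|_p \leq \sup_{|w|<r}\|\tau_w g - g\|_p,$$
so that hypothesis (iii) forces $g*\phi_r$ to converge to $g$ in $\mathcal{L}^p(\R^n)$ \emph{uniformly} over $g\in M$ as $r\to 0$; similarly, since $\text{supp}\,\phi_r\subseteq\{|w|<r\}$, a second application of the same inequality gives $\|g*\phi_r\|_{\mathcal{L}^p(\{|v|>R+r\})} \leq \|g\|_{\mathcal{L}^p(\{|v|>R\})}$.

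For \emph{necessity} I would argue from a finite net. Assume $M$ is totally bounded and, given $\epsilon>0$, pick $g_1,\dots,g_n\in\mathcal{L}^p(\R^n)$ so that every $g\in M$ has some $g_i$ with $\|g-g_i\|_p<\epsilon$. Then (i) follows from $\|g\|_p\leq\epsilon+\max_i\|g_i\|_p$. For (ii): each $g_i$ admits $R_i$ with $\|g_i\|_{\mathcal{L}^p(\{|v|>R_i\})}<\epsilon$, and with $R=\max_i R_i$ the triangle inequality restricted to $\{|v|>R\}$ yields $\|g\|_{\mathcal{L}^p(\{|v|>R\})}<2\epsilon$ for all $g\in M$ (the factor $2$ is immaterial, since $\epsilon$ is arbitrary). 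For (iii): continuity of translation on $\mathcal{L}^p$ gives each $g_i$ a $\delta_i>0$ with $\|\tau_w g_i-g_i\|_p<\epsilon$ for $|w|<\delta_i$, and then $\delta=\min_i\delta_i$ works because $\|\tau_w g-g\|_p\leq 2\|g-g_i\|_p+\|\tau_w g_i-g_i\|_p$.

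For \emph{sufficiency}, assume (i)--(iii); since $\mathcal{L}^p(\R^n)$ is complete it suffices to produce, for every $\epsilon>0$, a finite $\epsilon$-net of $M$. Using (ii) fix $R$ with $\|g\|_{\mathcal{L}^p(\{|v|>R\})}<\epsilon/3$ for all $g\in M$; using (iii) and the displayed estimate fix $r>0$ with $\|g*\phi_r-g\|_p<\epsilon/3$ for all $g\in M$; put $R'=R+r$, so that also $\|g*\phi_r\|_{\mathcal{L}^p(\{|v|>R'\})}<\epsilon/3$. Now the family $\{\,(g*\phi_r)|_{\overline{B_{R'}}} : g\in M\,\}$, with $\overline{B_{R'}}=\{v\in\R^n:|v|\leq R'\}$, is bounded in $C(\overline{B_{R'}})$ by H\"older's inequality, $\|g*\phi_r\|_\infty\leq\|g\|_p\|\phi_r\|_{p'}$ (where $1/p+1/p'=1$), together with (i), and equicontinuous because $|g*\phi_r(v)-g*\phi_r(v')|\leq\|g\|_p\,\|\tau_{v-v'}\phi_r-\phi_r\|_{p'}$ with $\phi_r$ uniformly continuous and $\|g\|_p$ uniformly bounded by (i). By Arzel\`a--Ascoli this family is totally bounded in $C(\overline{B_{R'}})$, hence, since $\lambda(\overline{B_{R'}})<\infty$, totally bounded in $\mathcal{L}^p(\overline{B_{R'}})$; let $h_1,\dots,h_N$, extended by zero outside $\overline{B_{R'}}$, be a corresponding $(\epsilon/3)$-net. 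For an arbitrary $g\in M$ choose $j$ with $\|g*\phi_r-h_j\|_{\mathcal{L}^p(\overline{B_{R'}})}<\epsilon/3$; then
$$\|g-h_j\|_p \leq \|g-g*\phi_r\|_p + \|g*\phi_r-h_j\|_{\mathcal{L}^p(\overline{B_{R'}})} + \|g*\phi_r\|_{\mathcal{L}^p(\{|v|>R'\})} < \epsilon,$$
so $\{h_1,\dots,h_N\}$ is the required net.

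The hard part is the sufficiency direction, and within it three interlocking bookkeeping points: recognizing that (iii) is exactly what makes mollification converge \emph{uniformly} on $M$; setting up Arzel\`a--Ascoli on $\{(g*\phi_r)|_{\overline{B_{R'}}}\}$, where equicontinuity must be extracted from (i) plus uniform continuity of the fixed mollifier rather than from any smoothness of the $g$'s; and tracking tails, noting that convolution inflates the controlling ball from $B_R$ to $B_{R+r}$ so the final three-term estimate closes. Everything else reduces to the triangle inequality and standard $\mathcal{L}^p$ facts (completeness, density, continuity of translation). A viable alternative to the Arzel\`a--Ascoli step is to replace $\phi_r$ by averaging over a fine grid of cubes covering $\overline{B_{R'}}$: the resulting approximants then lie in a finite-dimensional subspace, where bounded sets are automatically totally bounded, at the price of an extra oscillation estimate for the averaged functions; I would nonetheless keep the convolution version for transparency.
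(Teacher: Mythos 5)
The paper does not prove this statement at all: it is the classical Kolmogorov--Riesz compactness theorem, quoted with a pointer to the literature (Hanche-Olsen--Holden and Wloka) and used as a black box in the proof of Theorem \ref{thm:L1conv}. So there is no in-paper argument to compare against; your proof stands or falls on its own, and it stands. The necessity direction is the routine three-fold triangle-inequality argument from a finite net (your factors of $2$ and $3$ in front of $\epsilon$ are indeed immaterial). The sufficiency direction is one of the two standard proofs: the Minkowski-integral-inequality bound $\|g*\phi_r-g\|_p\leq\sup_{|w|<r}\|\tau_wg-g\|_p$ correctly converts (iii) into uniform-over-$M$ convergence of the mollifications, the tail inflation from $B_R$ to $B_{R+r}$ is tracked correctly, and the Arzel\`a--Ascoli step (boundedness via H\"older against $\|\phi_r\|_{p'}$, equicontinuity via translation continuity of the fixed mollifier) is sound, as is the concluding three-term estimate. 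Interestingly, the alternative you sketch at the end --- averaging over a fine grid of cubes so that the approximants land in a finite-dimensional subspace --- is essentially the proof given in the Hanche-Olsen--Holden reference the paper cites, so your main route is genuinely different from the cited one; the convolution version buys smooth approximants and a clean reduction to Arzel\`a--Ascoli, while the cube-averaging version avoids Arzel\`a--Ascoli entirely at the cost of an extra oscillation estimate. Two cosmetic remarks: total boundedness is by definition the existence of finite $\epsilon$-nets, so the appeal to completeness of $\mathcal{L}^p$ is unnecessary; and the paper's definition of totally bounded asks for net centers $x_i\in M$, whereas your $h_j$ need not lie in $M$ --- replacing each $h_j$ whose $\epsilon$-ball meets $M$ by an element of $M$ in that ball yields a $2\epsilon$-net with centers in $M$, which closes this (purely conventional) gap.
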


\begin{thm}
\label{thm:L1conv}
Let $g \in D(\R^{mn})$ with $\text{supp}(g)\subseteq \Omega ^m$ and $T: \Omega \to E$ a uniform partition with resolution $\Delta \Omega$. Then $R$ converges pointwise to the identity with respect to the $\mathcal{L}^1$-norm, 
$$\|R(g)-g\|_1\to 0 \quad (\Delta \Omega \to 0).$$
\end{thm}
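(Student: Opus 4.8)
The plan is to prove this by the standard density argument for operator convergence: first establish the bound $\|R(g)\|_1 \le \|g\|_1$ (so $R$ is a contraction on $\mathcal{L}^1$), then prove pointwise convergence $\|R(g)-g\|_1 \to 0$ on a dense subset of the relevant function space, and finally extend to all of $D(\R^{mn})$ with support in $\Omega^m$ by an $\varepsilon/3$ argument. The natural dense subset to work with is the continuous functions (or Lipschitz functions) supported on a slightly enlarged compact set, which are dense in $\mathcal{L}^1$; for such a function uniform continuity gives the convergence directly, since on each coding cube $\Omega_\varphi$ the averaged value $c_\varphi/\lambda(\Omega_\varphi)$ differs from $g$ pointwise by at most the modulus of continuity evaluated at $\sqrt{mn}\,\Delta\Omega$, the diameter of a cell.

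First I would record that for any $h \in \mathcal{L}^1(\R^{mn})$ one has $\|R(h)\|_1 = \sum_{\varphi} |c_\varphi| \le \sum_\varphi \int_{\Omega_\varphi}|h| = \|h\|_1$ (using that the cells partition $\Omega^m$ and $h$ is supported there, or more generally $\le \|h\|_1$ regardless), so $R$ is linear with operator norm at most $1$. Hence $\|R(g)-g\|_1 \le \|R(g-h)\|_1 + \|R(h)-h\|_1 + \|h-g\|_1 \le 2\|g-h\|_1 + \|R(h)-h\|_1$ for any $h$. Given $\varepsilon>0$, pick a continuous $h$ supported in a fixed compact neighborhood of $\Omega^m$ with $\|g-h\|_1 < \varepsilon/3$; this is possible since continuous compactly supported functions are dense in $\mathcal{L}^1(\R^{mn})$.

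Next I would handle the continuous piece. By uniform continuity of $h$ on its compact support, for $\Delta\Omega$ small enough we have $|h(v)-h(w)| < \eta$ whenever $|v-w| \le \sqrt{mn}\,\Delta\Omega$. For each cell $\Omega_\varphi$, the value of $R(h)$ on $\Omega_\varphi$ is the average of $h$ over $\Omega_\varphi$, so $|R(h)(v) - h(v)| \le \eta$ for $v \in \Omega_\varphi$; integrating over the finitely many cells that meet the support of $h$ (whose total measure is bounded by a constant $C$ depending only on the fixed neighborhood) gives $\|R(h)-h\|_1 \le C\eta$, which we make $<\varepsilon/3$ by choosing $\eta$ small, i.e. $\Delta\Omega$ small. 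Combining the three estimates yields $\|R(g)-g\|_1 < \varepsilon$ for all sufficiently small $\Delta\Omega$.

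The main subtlety — not really an obstacle but the point requiring care — is bookkeeping around the supports and the uniform partition: as $\Delta\Omega \to 0$ the partition $E$ changes, and the cells near $\partial\Omega^m$ of a uniform hypercube partition may not exactly tile $\Omega$, so one should either assume (as the uniform-partition setup implicitly allows) that $\Omega^m$ is exactly partitioned, or absorb the boundary mismatch into the enlarged compact neighborhood and note its contribution to the $\mathcal{L}^1$ error vanishes. One must also make sure the approximant $h$ and all the partitions are compared on one fixed compact set so that the constant $C$ bounding the relevant total cell measure does not blow up; taking $h$ supported in, say, a fixed $1$-neighborhood of $\Omega^m$ and restricting attention to $\Delta\Omega \le 1$ suffices. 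Everything else is the routine $\varepsilon/3$ splitting enabled by the contraction property of $R$.
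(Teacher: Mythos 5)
Your proof is correct, but it takes a genuinely different route from the paper's. The paper never approximates $g$ by a nicer function: it bounds $\|R(g)-g\|_1$ directly by $\frac{1}{\Delta\Omega^{mn}}\int_{[-\Delta\Omega,\Delta\Omega]^{mn}}\int_{\R^{mn}}|g(v+u)-g(v)|\,dv\,du$ via a change of variables and Fubini, thereby reducing everything to the $\mathcal{L}^1$-continuity of translation, $\int|g(\cdot+u)-g|\to 0$ as $u\to 0$; it then obtains that single fact by applying criterion (iii) of the Kolmogorov--Riesz theorem to the (trivially totally bounded) singleton $\{g\}$. You instead use the $\varepsilon/3$ scheme: the contraction estimate $\|R(h)\|_1\le\|h\|_1$, density of continuous compactly supported functions, and uniform continuity on the approximant. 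In substance you have inlined the standard proof of translation continuity that the paper outsources to a cited theorem, so the two arguments rest on the same analytic fact. What each buys: yours is more elementary and self-contained (no Kolmogorov--Riesz needed, and the contraction property of $R$ is worth recording anyway); the paper's is shorter given the citation, and its formulation makes visible that the convergence would be uniform over any totally bounded family in $\mathcal{L}^1$, not just for a single $g$. The bookkeeping issues you flag --- cells near $\partial\Omega^m$ and the support of the approximant $h$ spilling outside $\Omega^m$ --- are real but harmless: since $g$ vanishes off $\Omega^m$, the leftover mass $\int_{(\Omega^m)^c}|h|$ is bounded by $\|g-h\|_1$ and is already absorbed in your first error term.
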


\begin{proof}
\begin{align*}
\|R(g)-g\|_1&=\int_{v \in \Omega ^m} \left|\sum_{\varphi \in E^I} \chi_{\Omega_{\varphi}}(v)\frac{1}{\Delta \Omega^{mn}}\int_{w \in \Omega_{\varphi}}g(w)dw-\sum_{\varphi \in E^I} \chi_{\Omega_{\varphi}}(v)g(v)\right|dv\\
&\leq \sum_{\varphi \in E^I} \frac{1}{\Delta \Omega^{mn}}\int_{v \in \Omega_{\varphi}}\int_{w \in \Omega_{\varphi}}\left|g(w)-g(v)\right|dwdv\\
&= \sum_{\varphi \in E^I} \frac{1}{\Delta \Omega^{mn}}\int_{v \in \Omega_{\varphi}}\int_{u \in \Omega_{\varphi}-v}\left|g(v+u)-g(v)\right|dudv\\
\end{align*}

The last step involves a change of variables from $w$ to $u:=w-v$, and $\Omega_{\varphi}-v:=\{u-v \,|\, u \in \Omega_{\varphi}\}$. As 
$$\Omega_{\varphi}-v\subseteq [-\Delta \Omega, \Delta \Omega]^{mn}$$
for $v \in \Omega_{\varphi}$ with $\varphi \in E^I$, we calculate with Fubini's theorem 
\begin{align*}
\|R(g)-g\|_1&\leq \sum_{\varphi \in E^I} \frac{1}{\Delta \Omega^{mn}}\int_{v \in \Omega_{\varphi}}\int_{u \in [-\Delta \Omega, \Delta \Omega]^{mn}}\left|g(v+u)-g(v)\right|dudv\\
&\leq \frac{1}{\Delta \Omega^{mn}}\int_{u \in [-\Delta \Omega, \Delta \Omega]^{mn}}\int_{v \in \R^{mn}}\left|g(v+u)-g(v)\right|dvdu.\\
\end{align*}
Let $\epsilon >0$. Because the set $\{g\}\subset \mathcal{L}^1(\R^n)$ is totally bounded, the theorem of Kolmogorov-Riesz, Thm. \ref{thm:Kolmogorov-Riesz}, guarantees that there is $\delta>0$ such that 
$$\int_{v \in \R^{mn}}\left|g(v+u)-g(v)\right|dv<\frac{\epsilon}{2^{mn}}$$
for $|u|<\delta$. If we choose $\Delta \Omega$ such that $\Delta \Omega<\frac{\delta}{\sqrt{mn}}$ we ensure that $$|u|=\sqrt{\sum_{i=1}^{mn}|u_i|^2}\leq\sqrt{ \sum_{i=1}^{mn}\Delta \Omega^2}=\sqrt{mn}\Delta \Omega<\delta$$ 
for all $u \in [-\Delta \Omega, \Delta \Omega]^{mn}$ and hence
\begin{align*}
\|R(g)-g\|_1&< \frac{1}{\Delta \Omega^{mn}}\int_{u \in [-\Delta \Omega, \Delta \Omega]^{mn}}\frac{\epsilon}{2^{mn}}du=\epsilon.
\end{align*}
Therefore $\|R(g)-g\|_1 \to 0$ for $\Delta \Omega \to 0$. \qquad
\end{proof}

\subsection{Locality}
\label{subsec:Locality}

In this section we investigate in more depth the role of locality in approximating the discretized FPO $P_B$ by a CPA. It turns out that the CPA covers the dynamics of the underlying $P_B$ if only the support is considered. However, we cannot obtain the precise behavior of the global density with CPA in general, as will be shown with two examples. In the end we will see that according errors vanish for maximal pattern size. 

Let us start with the cover property. 

\begin{lm}
For all $n \in \N$, for all $g \in D_{BC}(E^I)$ and all $i \in \tilde I$ it holds that 
$$\text{supp }(\hat \beta P_B^n (g)(i)) \subseteq \text{supp }(f^n \hat \beta (g)(i)).$$
\end{lm}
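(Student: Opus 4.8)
The statement is a containment of supports that should be proved by induction on $n$. The base case $n=0$ is immediate since $P_B^0 = \mathrm{id}$ and $f^0 = \mathrm{id}$, so both sides equal $\text{supp}(\hat\beta(g)(i))$. For the inductive step I would unpack both sides at a single site $i\in\tilde I$ and a pattern $\psi\in E^V$ and compare when each coefficient is strictly positive.

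\textbf{First steps.}
On the left, $\hat\beta P_B^{n+1}(g)(i)(\psi) = \sum_{\chi\,:\,\chi|_{i+V}=\sigma_{-i}(\psi)} (P_B^{n+1}g)(\chi)$, and since $P_B$ is matrix multiplication with the discretized FPO whose entries are $P_{B,\varphi,\chi}=\lambda(\Omega_\varphi\cap\Phi^{-\tau}(\Omega_\chi))/\lambda(\Omega_\varphi)$, positivity of the left side means there is a global preimage state $\varphi\in E^I$ with $(P_B^n g)(\varphi)>0$ and $\lambda(\Omega_\varphi\cap\Phi^{-\tau}(\Omega_\chi))>0$ for some $\chi$ with $\chi|_{i+V}=\sigma_{-i}(\psi)$. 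Using the locality property this last condition only constrains $\varphi$ on the sites $i+U+V$, so restricting $\varphi$ there gives a local preimage pattern $\varphi|_{i+U+V}$ that feeds into the definition of $f$, and $f_0$ applied to (the shift of) it assigns positive weight to $\psi$. On the right, $f^{n+1}\hat\beta(g)(i)(\psi)=f(f^n\hat\beta(g))(i)(\psi)$, which is a sum of nonnegative terms $\alpha_{\tilde U(i)}(\cdots)(\varphi|_{V+\tilde U(i)})\cdot f_0(\varphi)(\psi)$; I need to exhibit one term that is strictly positive.

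\textbf{Connecting the two.}
The key is that the inductive hypothesis gives, for each site $j$ in the relevant neighborhood, that $\sigma_j(\varphi)|_V$ (the local pattern read off from the global preimage $\varphi$) lies in $\text{supp}(f^n\hat\beta(g)(i+j))$, because it certainly lies in $\text{supp}(\hat\beta P_B^n(g)(i+j))$ by the same marginalization argument applied to $(P_B^n g)(\varphi)>0$. Then I must check that $\alpha_{\tilde U(i)}$ of the de Bruijn density $\sigma_i(f^n\hat\beta(g))|_{\tilde U(i)}$ assigns positive weight to the preimage pattern $\varphi|_{V+\tilde U(i)}$: because $\alpha_{W,i}$ is a product of (conditional) marginals of the $g(j)$'s, and each of those marginals is positive exactly when the corresponding sub-pattern is in the support, positivity of $\alpha$ follows once every local factor has the needed sub-pattern in its support — which is what the inductive hypothesis supplies. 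Handling the boundary sites is routine since there $\varphi$ is forced to equal $\rho$ and contributes factor $1$.

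\textbf{Main obstacle.}
The delicate point is the bookkeeping with the shifts $\sigma_j$ and the overlapping windows $i+U+V$, $\tilde U(i)$, $V+\tilde U(i)$ near the boundary, and making sure that the \emph{single} global preimage $\varphi$ extracted from the left-hand side is consistent across all the local factors appearing in $\alpha_{\tilde U(i)}$ — i.e. that the local patterns read off at neighboring sites genuinely glue (they do, since they all come from restricting one global $\varphi$, which is precisely the extendability built into $D^{\tilde I}_{dBe}$). A clean way to organize this is to first prove the auxiliary fact that $\sigma_i(\varphi)|_V\in\text{supp}(\hat\beta P_B^n(g)(i))$ whenever $(P_B^n g)(\varphi)>0$, then feed it through the inductive hypothesis, and only then verify positivity of the $\alpha_{\tilde U(i)}$ factor; isolating that lemma keeps the shift-index manipulations contained.
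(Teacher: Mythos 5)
Your proposal is correct and is essentially the paper's argument: both rest on extracting a global witness $\varphi$ with $P_B^n(g)(\varphi)>0$ and a positive global transition, localizing via $\Omega_\varphi\cap\Phi^{-\tau}(\Omega_\chi)\subseteq\Omega_{\varphi|_{j+U+V}}\cap\Phi^{-\tau}(\Omega_{\chi|_{j+V}})$ to get $f_0>0$, and then using positivity of every local factor to conclude positivity of the $\alpha_{\tilde U(i)}$ term. The only difference is organizational — you run a single forward induction on $n$, while the paper first pulls back a whole chain $\varphi_0,\dots,\varphi_n$ of global states and then propagates positivity forward through $f$ — and your version has the minor merit of making explicit the step (implicit in the paper's ``therefore'') that $\alpha_{\tilde U(i)}$ of a de Bruijn density is positive on a pattern whose local restrictions all lie in the respective supports.
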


\begin{proof}
Let $n \in \N$, $g \in D_{BC}(E^I)$, $i \in \tilde I$ and $\chi \in \text{supp }(\hat \beta P_B^n(g)(i)) \in E^V$. Then there is $\psi \in E^I$ such that $P_B^n(g)(\psi)>0$ and $\sigma_i(\psi)|_V=\chi$. Let $\varphi_n=\psi$. Per induction it can be shown that we can find $\varphi_0,...,\varphi_{n-1} \in E^I$ such that $P_{B,\varphi_{k-1},\varphi_k}=\frac{\lambda(\Omega_{\varphi_{k-1}}\cap\Phi^{-\tau}(\Omega_{\varphi_n}))}{\lambda(\Omega_{\varphi_{n-1}})}>0$
and $P_B^{k-1}(g)(\varphi_{k-1})>0$ for $k \in \{1,...,n\}$. Because for all $j \in \tilde I$
\begin{align*}
\Omega_{\varphi_{k-1}}\cap \Phi^{-\tau}(\Omega_{\varphi_k}) &\subseteq \Omega_{\varphi_{k-1}|_{j+U+V}}\cap \Phi^{-\tau}(\Omega_{\varphi_n}|_{k+U+V}), \\
\Omega_{\varphi_{k-1}} & \subseteq \Omega_{\varphi_{k-1}}|_{j+U+V}, 
\end{align*}
we conclude that $f_0(\sigma_j(\varphi_{k-1})|_{U+V})(\sigma_j(\varphi_k)|_V)>0$ for all $j \in \tilde I$. Furthermore $\hat \beta (g)(j)(\sigma_j(\varphi_{0}))>0$ for all $j \in \tilde I$, and therefore $f\hat \beta (g)(j)(\sigma_j(\varphi_1)|_V)>0$ for all $j \in \tilde I$. This induces $f^2\hat \beta(g)(j)(\sigma_j(\varphi_2)|_V)>0$ for all $j \in \tilde I$ and so on, and therefore $f^n\hat \beta (g)(j)(\sigma_j(\varphi_n)|_V)>0$ for all $j \in \tilde I$. Recalling that $\sigma_i(\varphi_n)|_V=\chi$, we conclude that $\chi \in \text{supp }(f^n\hat \beta (g)(i))$. 
\end{proof}

However, we cannot recover the precise global behavior of the discretized FPO from a CPA. The errors that can occur are twofold, and we will provide examples for both types here. On the one hand it may happen that correlations over $|V|$ sites are not preserved because we work on patterns of size $V$. On the other hand we will see that even for $U \subseteq V$ in general there are locally allowed transitions of a global state that are not allowed in a global consideration with $P_B$. This is remarkable, since such behavior was ruled out for the underlying dynamical system by the locality property. But because a CPA only computes locally, that may also lead to errors. While the first error type is a true locality effect, the second arises from the interplay of locality and state space discretization. 

\begin{ex}
This example shows that in general correlations over $|V|$ sites are not preserved. We compare one CPA time step to one time step with the discretized FPO. Consider $I=\{1,2,3\}$, and the dynamical system that is given by the identity on $\Omega ^m=[0,1]^2$, i.e. $U=\{0\}$ and $K=\emptyset$. We choose the partition $\Omega_0=[0,0.5)$ and $\Omega_1=[0.5,1]$, i.e. $E=\{0,1\}$, and look at the CPA with $V=\{0,1\}$ and $\tilde I=\{1,2\}$. We find $f_0(\varphi)(\psi)=\delta_{\varphi, \psi}$  for all $\varphi, \psi \in E^{\{0\}}$. 

Consider $g \in D_{BC}(E^I)=D(E^{V+W})$ with $W=\tilde I$ from Ex. \ref{ex:beta}. $\hat \beta (g)=\beta_{\tilde I}(g)$, and also $f\hat \beta (g)=\beta_{\tilde I}(g)$. So $\hat \alpha \hat \beta (g)=\alpha_{\tilde I} \beta_{\tilde I}(g)=\tilde g$. However, $P_B(g)=g$, and so $\hat \alpha f \hat \beta (g) \neq P_B(g)$. 
\end{ex}


\begin{figure}
\centering
\includegraphics[width=10cm]{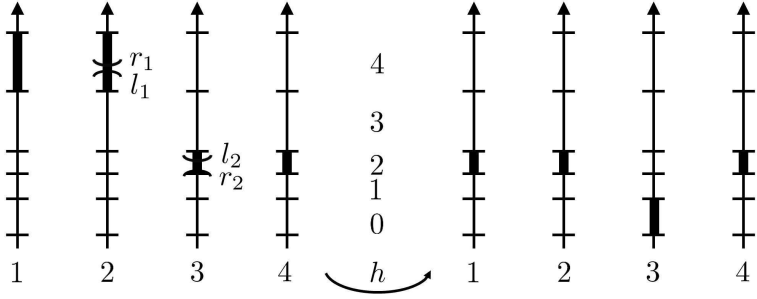}
\caption{Illustration of a transition from $\chi=(4,4,2,2)$ to $\psi=(2,2,0,2)$ in Ex. \ref{ex:locerror}. The left side shows the preimage, the right the image state. The horizontal numbers correspond to the respective sites, while the vertical numbers display $E=\{0,...,4\}$. The states $\chi$ and $\psi$ are marked by black rectangles at the corresponding sites. }
\label{fig:localityex}
\end{figure}

\begin{ex}
\label{ex:locerror}
This example shows that transitions at different sites are not independent in general. By comparing one CPA time step to one time step with the discretized FPO we see that a specific local transition at one site cannot take place if another specific local transition happens at a neighboring site, although both transitions are allowed locally. Consider $I=\{1,...,4\}$, and the system on dynamically invariant state space $\Omega ^m=[0,1]^4$ given for all $n \in \N$ by $v_{i}^{n+1}=h(v_i^n,v_{i+1}^n)=\frac{v_i^n+v_{i+1}^n}{3.75}$ for $i \in \{1,...,m-1\}$. We have $U=\{0,1\}$ and define $5$ intervals 
$$\Omega_j=[w_j,w_{j+1}), \quad \text{ for } j\in \{0,...,3\}, \quad \Omega_{4}=[w_{4},w_5]$$
with 
$$w_0 =0, \quad w_1=0.183, \quad w_2=0.31, \quad w_3=0.4, \quad w_4=0.7, \quad w_5=1,$$
name them by their index and obtain a partition of $\Omega$ with $E=\{0,...,4\}$, see Fig. \ref{fig:localityex}. The induced flow is denoted by $\Phi^1$ for one time step. We consider the CPA with $V=U$ for deterministic input $g \in D_{BC}(E^I)$ given by $g(\varphi)=\delta_{\chi,\varphi}$, where $\chi=(4,4,2,2)\in E^{I}$. We focus on the image state $\psi =(2,2,0,2)\in E^{I}$ and determine 
$$l_1=0.8, \quad l_2=0.3625, \quad r_1=0.83875, \quad r_2=0.32375$$
as the solution of the equations 
$$h(w_4,l_1)=w_3, \quad h(l_1,l_2)=w_2, \quad h(r_1,r_2)=w_2, \quad h(r_2,w_2)=w_1.$$
It is possible to show that 
\begin{align*}
\{\Omega _{\chi|_{1+U+V}}\cap \Phi^{-1}(\Omega _{\psi|_{1+V}})\} &\subseteq \{v \in \Omega \,|\, w_4\leq v_2 \leq l_1, l_2\leq v_3 \leq w_3\}, \\
\{\Omega _{\sigma_1(\chi|_{2+U+V})}\cap \Phi^{-1}(\Omega _{\sigma_1(\psi|_{2+V})})\} &\subseteq \{v \in \Omega \,|\, r_1\leq  v_2 \leq 1, w_2 \leq v_3 \leq r_2\},
\end{align*}

$f_0(\sigma_1(\chi|_{1+U+V}))(\sigma_1(\psi|_{1+V})))>0$ and $f_0(\sigma_2(\chi|_{2+U+V}))(\sigma_2(\psi|_{2+V}))>0$. Hence $\hat \alpha f \hat \beta (g)(\psi)>0$, but 
\begin{align*}
P_B(g)(\psi)&=\sum_{\varphi \in E^I}g(\varphi)P_{B,\varphi,\psi}=\frac{\lambda(\Omega _{\chi}\cap \Phi^{-1}(\Omega _{\psi}))}{\lambda(\Omega _{\chi})}\\
\leq &\frac{\lambda((\Omega _{\chi|_{1+U+V}}\cap \Phi^{-1}(\Omega _{\psi|_{1+V}}))\cap(\Omega _{\chi|_{2+U+V}}\cap \Phi^{-1}(\Omega _{\psi|_{2+V}})))}{\lambda(\Omega _{\chi})}\\
=&\frac{\lambda(\emptyset)}{\lambda(\Omega _{\chi})}=0,
\end{align*}
and so $\hat \alpha f \hat \beta (g)\neq P_B (g)$. 
\end{ex}

Both examples are scalable in the sense that we can find analogous partitions of $[0,c], c \in (0,1),$ with the above properties by dividing all phase space coordinates by $c$ and complete the partition in $[c,1]$ arbitrarily. So for decreasing size of the coding domains we can still find a partition of $[0,1]$ with the above effects: locality errors are independent from resolution errors. 

Furthermore the examples suggest to choose large $V$ for good approximations. Accordingly it can be proven that for maximal $V$ the CPA exactly corresponds to the discretized FPO. 

\begin{prop}
\label{prop:VmaxeqFPO}
For $V=V_{\max}$ we find that $\hat \alpha f \hat \beta=P_B$. 
\end{prop}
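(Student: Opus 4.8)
The plan is to use that $V=V_{\max}$, i.e. $i_l=i_r$, collapses $\tilde I$ to the single site $i_l=1+p+r$ and forces $1+p+q+r+s=m$, so that $i_l+(U+V)=I$ and $i_l+V=\{1+r,\dots,m-s\}=I\setminus K$. I would then compare $\hat\alpha f\hat\beta(g)$ with $P_B(g)$ for an arbitrary $g\in D_{BC}(E^I)$ by evaluating both at a global state $\psi'\in E^I$, using the simplifications recorded just after the lemma above: $\tilde U(i_l)=\{0\}$, $\alpha_{\{0\}}(\sigma_{i_l}(g))(\varphi|_V)=g(i_l)(\varphi|_V)$, hence $f(g)(i_l)(\psi)=\sum_{\varphi}g(i_l)(\varphi|_V)\,f_0(\varphi)(\psi)$ with the sum over those $\varphi\in E^{U+V}$ with $\sigma_{-i_l}(\varphi)|_K=\rho$; together with the fact that for $V_{\max}$ the maps $\hat\beta$ and $\hat\alpha$ are merely re-indexings, $\hat\beta(g)(i_l)(\varphi|_V)=g(\sigma_{-i_l}(\varphi))$ whenever $\sigma_{-i_l}(\varphi)|_K=\rho$ (because $i_l+V=I\setminus K$ and $g$ is supported on boundary-compatible states, so only one term of the defining sum survives), and $\hat\alpha(h)(\psi')=h(i_l)(\sigma_{i_l}(\psi'|_{I\setminus K}))$ for $\psi'|_K=\rho$ and $0$ otherwise.

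The heart of the proof is to identify $f_0$ with an entry of the matrix $P_B$. By Definition \ref{def:cpafromode}, $f_0(\varphi)(\psi)=\lambda(\Omega_{\sigma_{-i_l}(\varphi)}\cap\Phi^{-\tau}(\Omega_{\sigma_{-i_l}(\psi)}))/\lambda(\Omega_{\sigma_{-i_l}(\varphi)})$, where now $\sigma_{-i_l}(\varphi)\in E^I$ is a full global symbol (so $\Omega_{\sigma_{-i_l}(\varphi)}$ is a single coding box) while $\sigma_{-i_l}(\psi)\in E^{I\setminus K}$ determines a cylinder. Let $\psi'\in E^I$ be the boundary-compatible completion of $\sigma_{-i_l}(\psi)$, i.e. $\psi'|_K=\rho$ and $\psi'|_{I\setminus K}=\sigma_{-i_l}(\psi)$. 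I would argue that when $\sigma_{-i_l}(\varphi)|_K=\rho$ one may replace the cylinder $\Omega_{\sigma_{-i_l}(\psi)}$ by the box $\Omega_{\psi'}$ inside the intersection: by property (v) of the dynamical system $\Phi^\tau$ acts as the identity on the coordinates indexed by $K$, so any $v\in\Omega_{\sigma_{-i_l}(\varphi)}$ already satisfies $\hat T(\Phi^\tau v)|_K=\hat T(v)|_K=\rho$; hence $\Omega_{\sigma_{-i_l}(\varphi)}\cap\Phi^{-\tau}(\Omega_{\sigma_{-i_l}(\psi)})=\Omega_{\sigma_{-i_l}(\varphi)}\cap\Phi^{-\tau}(\Omega_{\psi'})$, and therefore $f_0(\varphi)(\psi)=P_{B,\sigma_{-i_l}(\varphi),\psi'}$.

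Inserting this and $\hat\beta(g)(i_l)(\varphi|_V)=g(\sigma_{-i_l}(\varphi))$ into the simplified formula for $f$ and substituting $\chi=\sigma_{-i_l}(\varphi)$ (which ranges over all boundary-compatible global symbols, the sum being extendable to all $\chi\in E^I$ since $g$ vanishes off $D_{BC}(E^I)$) gives $f(\hat\beta(g))(i_l)(\psi)=\sum_{\chi\in E^I}g(\chi)\,P_{B,\chi,\psi'}=(P_Bg)(\psi')$. Applying $\hat\alpha$ then yields $\hat\alpha f\hat\beta(g)(\psi')=(P_Bg)(\psi')$ for every $\psi'$ with $\psi'|_K=\rho$; for $\psi'|_K\neq\rho$ both sides vanish, the left one by definition of $\hat\alpha$ and the right one because $P_B$ maps $D_{BC}(E^I)$ into itself. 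Since $g$ was arbitrary, $\hat\alpha f\hat\beta=P_B$.

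I expect the only real work to be the bookkeeping with the shift operators and the index sets — verifying $i_l+(U+V)=I$, $i_l+V=I\setminus K$, and that the various $\sigma_{\pm i_l}$ translate slices correctly — together with the one genuinely substantive step, the cylinder-to-box replacement in $f_0$, which must be justified carefully from property (v): it is exactly the statement that under $\Phi^{-\tau}$ no probability mass is exchanged between boundary-compatible and boundary-incompatible coding boxes, so that restricting the target to the sites in $I\setminus K$ and completing with $\rho$ loses nothing.
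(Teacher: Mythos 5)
Your argument is correct. The paper actually states this proposition without proof, relying on the simplifications it records at the end of Section 3.3 (namely $\tilde U(i_l)=\{0\}$ and $f(g)(i_l)(\psi)=\sum g(i_l)(\varphi|_V)f_0(\varphi)(\psi)$ over boundary-compatible $\varphi$), and your proof is precisely the natural completion of that sketch: the index bookkeeping $i_l+V=I\setminus K$, $i_l+U+V=I$ makes $\hat\alpha$ and $\hat\beta$ mere re-indexings on $D_{BC}(E^I)$, and the one genuinely substantive step --- the identification $f_0(\varphi)(\psi)=P_{B,\sigma_{-i_l}(\varphi),\psi'}$ via the cylinder-to-box replacement justified by property (v) of the flow --- is exactly the point the paper leaves implicit, and you justify it correctly.
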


\subsection{Consistency Theorem}
\label{subsec:consistencythm}

The preceding results can be composed to a consistency result for uncertainty propagation with CPA: Up to technical postprocessing time evolution of probability densities with CPA can approximate time evolution with the FPO arbitrarily close. 

\begin{cor}
Let $T: \Omega \to E$ be a uniform partition with resolution $\Delta \Omega$ and $g \in \kappa_B^{-1}(D_{BC}(E^I))$. Then
$$\lim_{\Delta \Omega \to 0} \lim_{V \to V_{max}}\|\kappa_B ^{-1} \hat \alpha f \hat \beta \kappa_B(g) - P_{\Phi^\tau}(g)\|_{1}=0.$$
\end{cor}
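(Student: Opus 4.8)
The plan is to peel off the two limits one after the other: the inner one using Proposition~\ref{prop:VmaxeqFPO}, the outer one using Theorem~\ref{thm:L1conv}.

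The inner limit is attained rather than approached. The pattern grid $V=\{-p,\dots,q\}$ ranges over a finite family, and $V_{\max}$ is its extreme element (the case $i_l=i_r$), so $\lim_{V\to V_{\max}}$ amounts to evaluating at $V=V_{\max}$. By Proposition~\ref{prop:VmaxeqFPO} we then have $\hat\alpha f\hat\beta=P_B$ on $D_{BC}(E^I)$, whence
$$\lim_{V\to V_{\max}}\|\kappa_B^{-1}\hat\alpha f\hat\beta\kappa_B(g)-P_{\Phi^\tau}(g)\|_1=\|\kappa_B^{-1}P_B\kappa_B(g)-P_{\Phi^\tau}(g)\|_1.$$

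Next I would simplify the first term. Since $g\in\kappa_B^{-1}(D_{BC}(E^I))\subseteq\mathcal{L}^1_{\hat T}(\R^{mn})$ is piecewise constant and $\kappa_B(g)\in D_{BC}(E^I)$, the matrix-representation identity $P_B=\kappa_B RP_{\Phi^\tau}\kappa_B^{-1}$ gives $\kappa_B^{-1}P_B\kappa_B(g)=RP_{\Phi^\tau}(g)$. Hence the quantity to be controlled is exactly $\|R(P_{\Phi^\tau}(g))-P_{\Phi^\tau}(g)\|_1$, the restriction error incurred by the single density $P_{\Phi^\tau}(g)$. To invoke Theorem~\ref{thm:L1conv} for it I would verify its hypotheses: $P_{\Phi^\tau}(g)\in D(\R^{mn})$ because the FPO preserves positivity and normalization, and $\text{supp}(P_{\Phi^\tau}(g))\subseteq\Omega^m$ because $\text{supp}(g)\subseteq\Omega^m$ and $\Omega^m$ is positively invariant under the flow, so (up to null sets) the push-forward of $g$ cannot leave $\Omega^m$. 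Theorem~\ref{thm:L1conv} then yields $\|R(P_{\Phi^\tau}(g))-P_{\Phi^\tau}(g)\|_1\to0$ as $\Delta\Omega\to0$, which is the outer limit.

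The point that requires care is the $\Delta\Omega$-dependence of the objects: $R$, $\kappa_B$, $\hat\alpha$, $\hat\beta$ and $P_B$ all live over the chosen partition, whereas $g$ enters as an element of the space attached to one fixed resolution. To give the iterated limit a literal meaning I would refine the partitions along a sequence $\Delta\Omega_j\to0$ chosen so that $g$ stays piecewise constant --- hence remains in $\kappa_B^{-1}(D_{BC}(E^I))$ with $\kappa_B^{-1}\kappa_B(g)=g$ --- at every stage; with this reading the three steps above go through verbatim, and everything else reduces to routine substitution. I expect this bookkeeping, rather than any analytic difficulty, to be the only genuinely delicate part.
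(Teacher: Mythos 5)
Your proposal is correct and follows essentially the same route as the paper: the inner limit is evaluation at $V_{\max}$ (finite family, discrete topology) combined with Proposition~\ref{prop:VmaxeqFPO}, the identity $\kappa_B^{-1}P_B\kappa_B(g)=RP_{\Phi^\tau}(g)$ reduces the remaining expression to the restriction error of $P_{\Phi^\tau}(g)$, and Theorem~\ref{thm:L1conv} handles the outer limit. The extra details you supply --- checking that $P_{\Phi^\tau}(g)$ is a density supported in $\Omega^m$ via positive invariance, and flagging the $\Delta\Omega$-dependence of the operators --- are legitimate refinements of points the paper leaves implicit.
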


{\em Proof}
Because there is only a finite number of possible $V$, the limit $V \to V_{\max}$ is taken in the discrete topology. So the limit point is the evaluation with $V_{\max}$, and Prop. \ref{prop:VmaxeqFPO} can be used. In a second step Thm. \ref{thm:L1conv} can be applied to $P_{\Phi^\tau }(g)$: 
\begin{align*}
&\lim_{\Delta \Omega \to 0} \lim_{V \to V_{max}}\|\kappa_B ^{-1} \hat \alpha f \hat \beta \kappa_B(g) - P_{\Phi^\tau}(g)\|_{1}\\
=& \lim_{\Delta \Omega \to 0}\|\lim_{V \to V_{\max}}\kappa_B ^{-1} \hat \alpha f \hat \beta \kappa_B(g) - P_{\Phi^\tau}(g)\|_{1}\\
=& \lim_{\Delta \Omega \to 0}\|RP_{\Phi^\tau}(g) - P_{\Phi^\tau}(g)\|_{1}\\
=& 0. \qquad\endproof
\end{align*}

\section{Example}

\label{sec:Example}

In this section we comment on how to implement an algorithm to evolve uncertainties with CPA. The algorithm is tested at the problem of arsenate transportation and asorption in drinking water pipes, and the results are compared to a Monte Carlo calculation. Although the theory has been developed for uncertainties in initial conditions, in this applicational part we extend the concept slightly such that CPA can cope with certain stochastic boundary conditions. This is important in contaminant transport modeling \cite{randomsources}. With this first generalization we want to provide evidence that with CPA the treatment of more general stochastic spatio-temporal processes seems feasible. 

\subsection{Stochastic Boundary Conditions} 

We deal with stationary temporal white noise boundary conditions 
\begin{align*}
g_l \in D(E^{K_l}) &\text{ for } K_l=\{1,...,r\},\\
g_r \in D(E^{K_r}) &\text{ for } K_r= \{m-s+1,...,m\}
\end{align*}
instead of deterministic $\rho \in E^K$. With stationary we mean that the densities do not change in time, and the term temporal white noise indicates that there are no correlations in the boundary random variable's realizations at different times. 

For this purpose the global function in Def. \ref{defn:CPA} is extendend to
\begin{equation*}
f: D_{dBe}^{\tilde I} \rightarrow D_{dBe}^{\tilde I}, \quad g \mapsto f(g), 
\end{equation*}
\begin{align*}
\underline g(i)(\varphi)&=\sum_{\overset{\chi \in E^{K_l} \text{ s.t. }\sigma_{i-i_l}(\chi)|_{\{1,...,r+i_l-i\}}=}{\sigma_{-i_l}(\varphi)|_{\{1,...,r+i_l-i\}}}}g_l(\chi),\\
\overline g(i)(\varphi)&=\sum_{\overset{\chi \in E^{K_r} \text{ s.t. }\sigma_{i-i_r}(\chi)|_{\{m-s+1-i+i_r,...,m\}}=}{\sigma_{-i_r}(\varphi)|_{\{m-s+1-i+i_r,...,m\}}}}g_r(\chi),\\
f(g)(i)(\psi) &= \sum_{\varphi \in E^{U+V}}\underline g(i)(\varphi)\cdot \alpha_{\tilde U(i)}(\sigma_i(g)|_{\tilde U(i)})(\varphi|_{V+\tilde U(i)})\cdot \overline g(i)(\varphi) \cdot f_0(\varphi)(\psi).\\
\end{align*}

Furthermore the relations between global and de Bruijn densities then have to be generalized to 
 $\hat \alpha: D(E^{K_l})\times D_{dBe}^{\tilde I} \times D(E^{K_r}) \to D(E^I), \,(g_l\times g \times g_r) \mapsto \hat \alpha((g_l\times g \times g_r))$ with
\begin{align*}
\hat \alpha((g_l\times g \times g_r))(\psi) &= g_l(\psi|_{K_l})\alpha_{\tilde I}(g)(\psi|_{\{1+r,...,m-s\}})g_r(\psi|_{K_r}) 
\end{align*}
and 
$\hat \beta: D(E^I) \to D(E^{K_l})\times D_{dBe}^{\tilde I} \times D(E^{K_r}), \, g \mapsto \hat \beta (g)=(g_l,g_{\tilde I},g_r)$ with 
\begin{align*}
g_l(\psi)&=\sum_{\chi \in E^I \text{ s. t. } \chi|_{K_r}=\psi}g(\chi), \\
g_{\tilde I}(i)(\psi)&=\sum_{\overset{\chi \in E^I \text { s. t. }} {\chi |_{i+V}=\sigma_{-i}(\psi)}}g(\chi),\\
g_r(\psi)&=\sum_{\chi \in E^I \text{ s. t. } \chi|_{K_l}=\psi}g(\chi).\\
\end{align*}

CPA with stochastic boundary conditions may be used to approximate spatio-temporal processes with deterministic dynamics, in which the initial and boundary conditions are stochastic. We remark that it is straight-forward to use time-dependent stochastic boundary conditions instead of stationary ones. 

\subsection{Implementation}

\label{subsec:Implementation}

From an implementational point of view two steps of uncertainty propagation with CPA have to be distinguished. Step one is the tranlation of the completely continuous system into a CPA. This is independent of initial or boundary conditions and can be achieved in a preprocessing procedure. Step two consists of the CPA evolution with given initial and boundary values. It turns out that step one is numerically more expensive than step two. For industrial applications like simulation based system monitoring the CPA method points towards real-time uncertainty quantification, because the slow step one only has to be performed once before the actual simulation. We furthermore note that by construction the simulation is parallelizable in space. 

Step one basically consists of the approximation of local transition probabilities. We propose a local version of the standard Monte Carlo quadrature approach \cite{Hunt} in set oriented numerics for this purpose. We remark that also advanced adaptive methods have been suggested, see e. g.~\cite{Guder}.

\begin{compactitem}
\item [i)] For $\varphi \in E^{U+V}$ choose $W_\varphi$ test vectors $w_i=(w_{i,-r-p},...,w_{i,s+q}) \in (\R^n)^{U+V}$, where $\{w_{i,j}\}_{i \leq W_\varphi}$ is randomly distributed over coding domain $\Omega_{\varphi(j)} \subseteq \Omega$, respectively. 
\item [ii)] Compute for all $i \leq W_\varphi$ the image points
$$\tilde w_i=(h(\tau,w_{i,-r-p},...,w_{i,s-p}),h(\tau,w_{i,-r-p+1},...,w_{i,s-p+1}),...,h(\tau,w_{i,-r+q},...,w_{i,s+q})).$$ 
\item [iii)] Determine $\psi_1,...,\psi_L \in E^{V}$ such that there is $l \leq L$ and $\tilde w_i$ with $T((\tilde w_i)_j)=(\psi_l)_j$ for all $j \in V$. Let the number of image points in the specific coding domain be denoted by $W_{\psi_l}$, i.e. $\sum_{l=1}^L W_{\psi_l}=W_\varphi$. The local transition function is then approximated by
$$f_0(\varphi) (\psi)= \left\{\begin{array}{ll}
W_{\psi}/W_\varphi & \mbox{ for all } \psi \in \{\psi_1,...,\psi_L\}\\
0 & \mbox{ else } \\
\end{array}\right..$$
\end{compactitem}

With increasing number of test points the approximation is expected to get better. However, the number of evaluations grows exponentially in $|V|$. So we suggest to use de Bruijn calculus to determine transition probabilities for large $V$ from transition probabilities for smaller $\tilde V$, see Fig. \ref{fig:CPAdef}. For given $\tilde f_0:E^{U+\tilde V} \to D(E^{\tilde V})$ and $W$ given by $V=\tilde V+W$ 
$$f_0:E^{V+U} \to D(E^{V}), \quad \varphi \mapsto f_0(\varphi),$$
where 
$$f_0(\varphi)(\psi)=\alpha_W(\hat g)(\psi),$$
$$\hat g \in (D(E^{\tilde V}))^W, \quad \hat g (i)=\tilde f_0(\varphi|_{i+\tilde V+U}).$$
It can be shown with an example similar to Ex. \ref{ex:locerror} that this is again just an approximation of the directly calculated $f_0$. 

Regarding step two, the simulation with CPA, we remark that it is important to only follow and store states with probability larger than a specified threshold whenever possible. Otherwise already for reasonably large $E$ or $V$ the calculations are not feasible. An example is the de Bruijn density $D_{dBe}^{\tilde I}$, where $|E|^{|V|}$ numbers would have to be handled at every site in $\tilde I$. The set of states with positive probability is much smaller, although it typically first grows and then shrinks again in the transient phase of dynamics. Note that our de Bruijn choice of $\alpha_W$ enables such sparse calculations, whereas the whole space is needed to solve for example a linear nonnegative least squares problem.

\subsection{Arsenate Fate in Water Pipe}

Consider the advection and adsorption of arsenate in drinking water pipes, a topic that has attracted a lot of attention in the water supply community lately \cite{EPANETMSXmanual, Klostermannexp}. We describe a water tank on a hill and a pipe to a consumer in a valley. Report locations to observe the arsenate concentrations are installed in a distance of $\Delta x$, see Fig. \ref{fig:setup}. The physics is described by the Langmuir adsorption model \cite{Koopal}  
\begin{equation*}
\begin{split}
\partial_tD+v\partial_xD&=-\frac{1}{r_h}\frac{1}{\frac{1}{k_1}+\frac{1}{k_f}(S_{max}-A)}(D(S_{max}-A)-K_{eq}A), \\
\partial_tA&=\frac{1}{\frac{1}{k_1}+\frac{1}{k_f}(S_{max}-A)}(D(S_{max}-A)-K_{eq}A)
\end{split}
\end{equation*}
where $D$ is the concentration of dissolved arsenate in $\frac{mg}{l}$ and $A$ the concentration of arsenate adsorbed at the pipe wall in $\frac{mg}{m^2}$. We adopt realistic parameter values from \cite{Klostermannexp, Pierce} 
\begin{align*}
v&=10 \frac{m}{min}, & r_h&=50 \frac{l}{m^2},\\
k_1&=0.2 \frac{l}{mg \, min}, & S_{max}&=100 \frac{mg}{m^2},\\
K_{eq}&= 0.0537 \frac{mg}{l}, & k_f&=2.4 \frac{l}{m^2 \, min},\\
\end{align*}
and consider the system on the approximately positively invariant $\Omega$ given by $D \in [0, 1]$ and $A\in [0, S_{max}]$. The backward difference with $U=\{-1,0\}$, $\Delta x=100m$ and $\Delta t=\Delta x/v=10min$ is used. 

\begin{figure}
\centering
\subfloat[\label{fig:setup}]{\includegraphics[width=15cm]{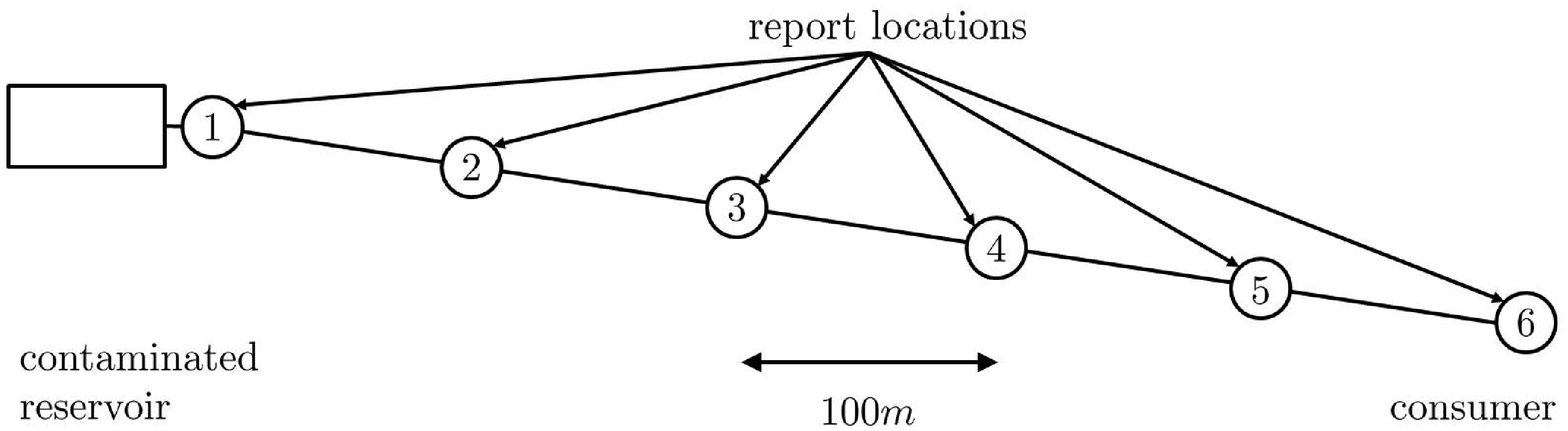}}

\hspace{1.25cm}
\subfloat[\label{fig:phasespace_C5P5}]{\includegraphics[width=5.5cm]{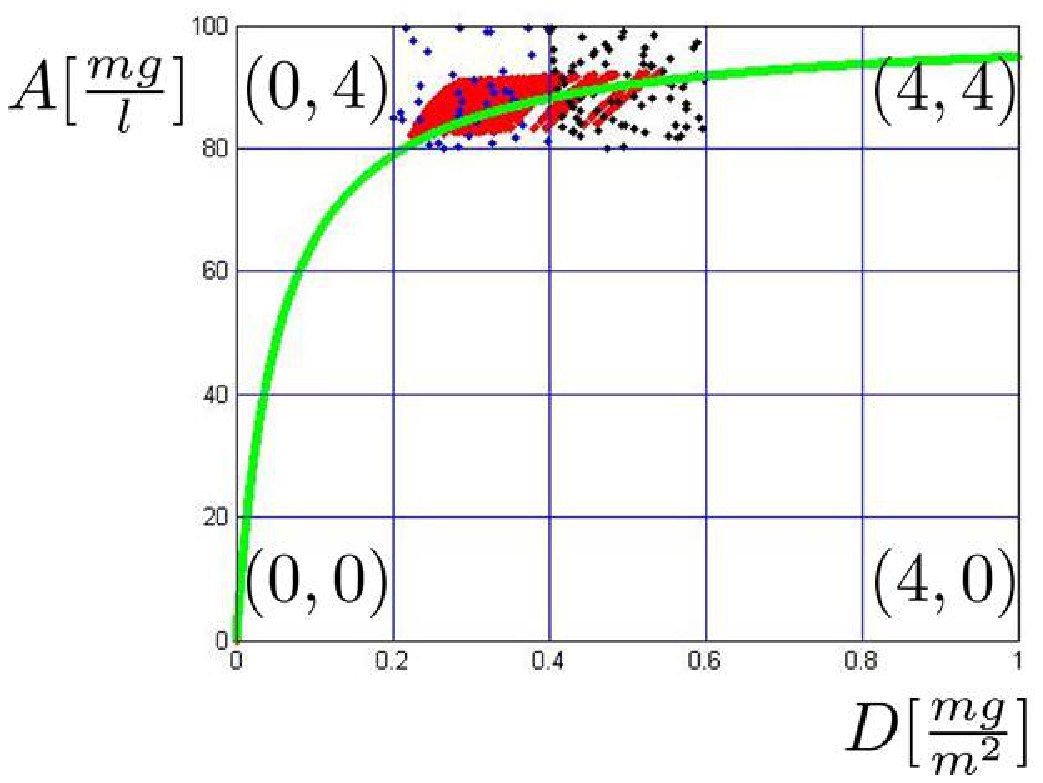}}
\hfill
\subfloat[\label{fig:C5P5_0h}]{\includegraphics[width=8cm]{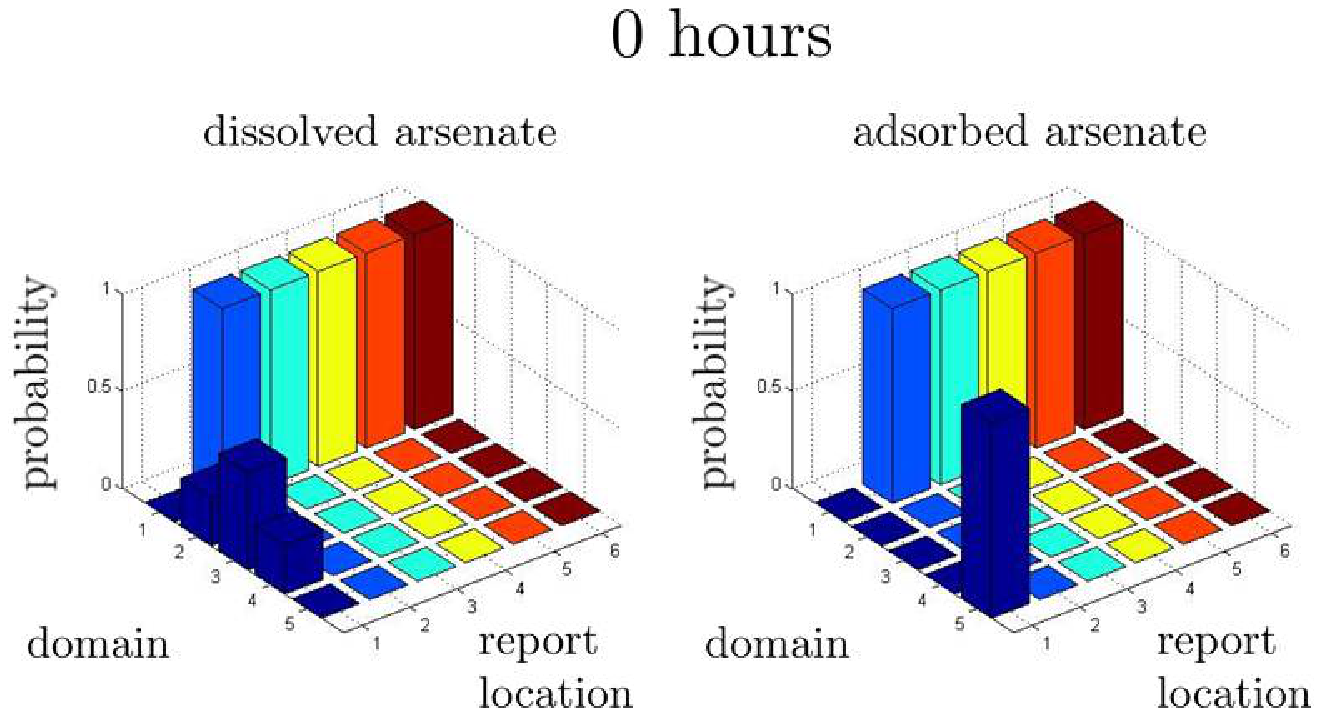}}

\subfloat[\label{fig:C5P5_1h}]{\includegraphics[width=8cm]{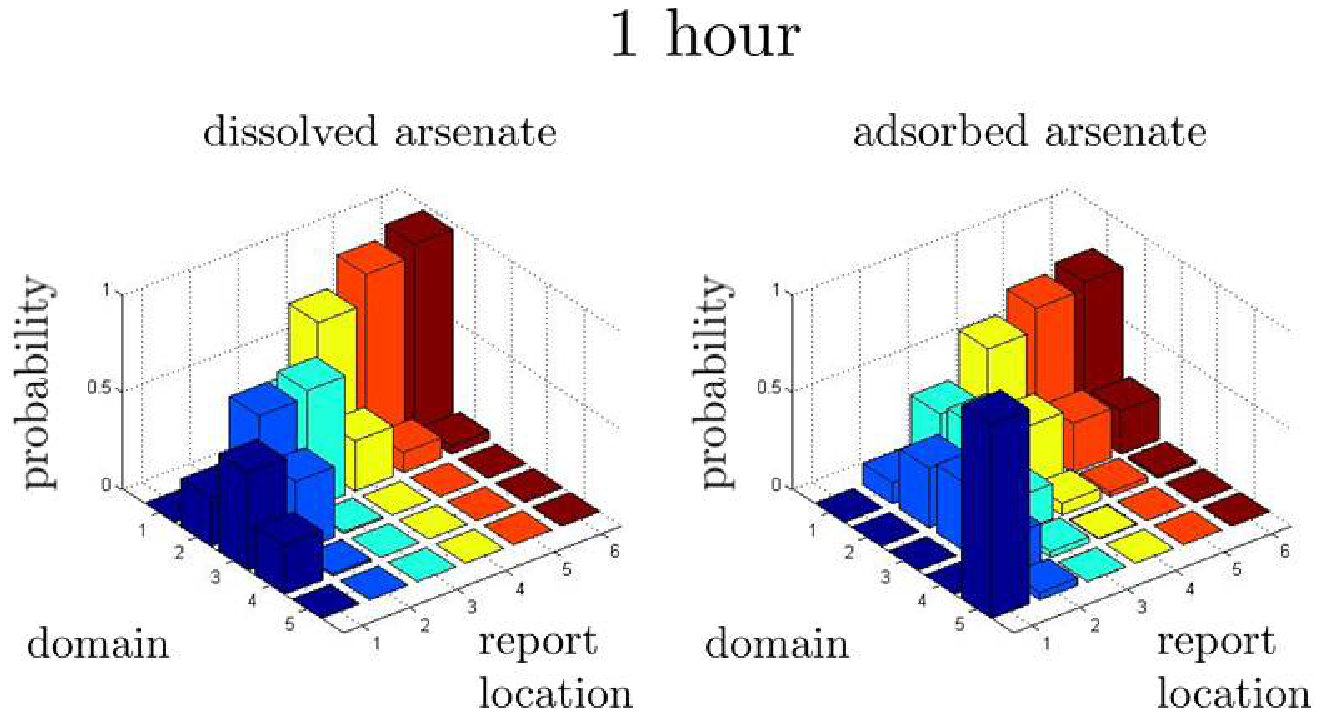}}
\hfill
\subfloat[\label{fig:C5P5_7h}]{\includegraphics[width=8cm]{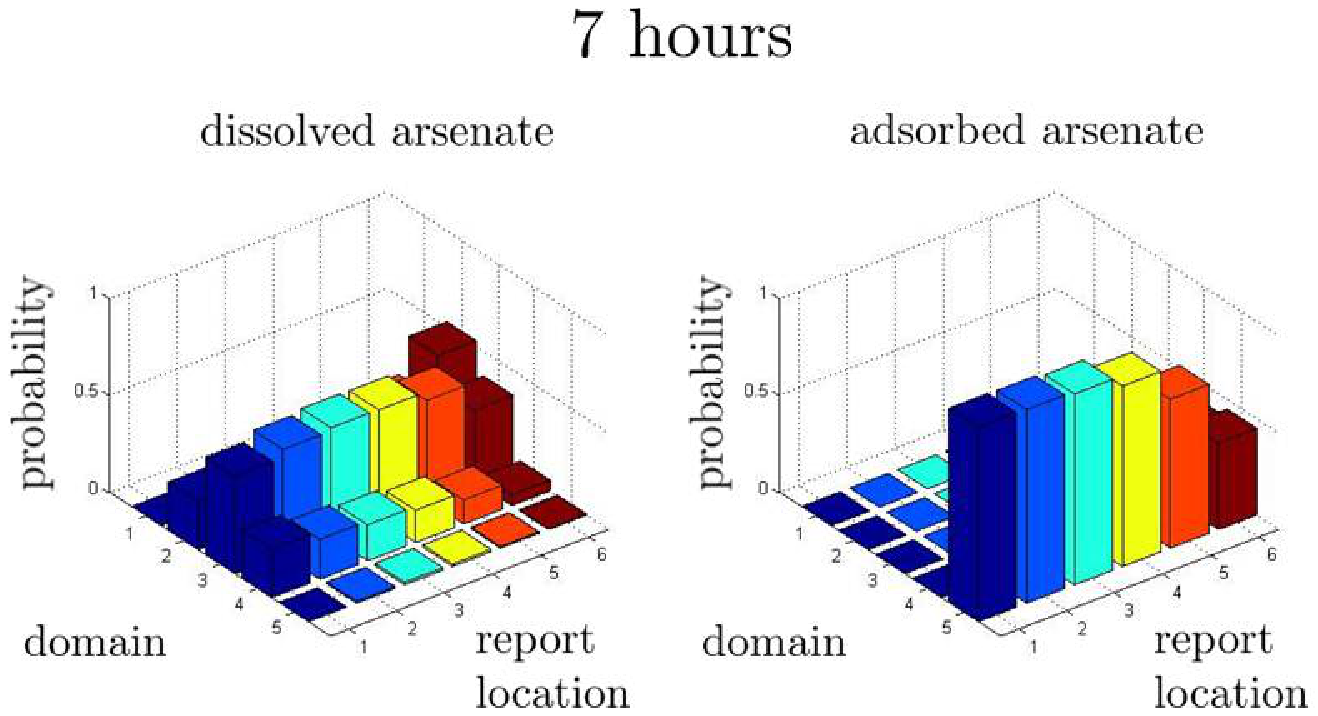}}
\caption{(a) A reservoir on a hill is connected to a consumer in a valley through a pipe with 6 report locations. (b) The phase space at every report location is divided into $5 \times 5$ coding domains, and the steady states are drawn in green. For example $f_0(((1,4), (2,4)))$ can be approximated by the transition of blue test points in domain $(1,4)$ and black ones in domain $(2,4)$ to the set of red points. (c) shows the initial conditions for an exemplary simulation with the according CPA, and the results after 1 and 7 hours are shown in (d) and (e), respectively. See also Fig. \ref{fig:steadystate}. }
\label{fig:setupdynamics}
\end{figure}

To obtain the local function of a CPA we map test points by using intermediate steps on the basis of the method of characteristics with the smaller $\Delta t'=0.1min$ and $\Delta x'=1m$. We use $V=\tilde V=\{0\}$ and partition the phase space equidistantly with 5 symbols in each of the $n=2$ directions. If we label the coding domains from $0$ to $4$ in each direction, the corresponding CPA results from transition probabilities like
$$f_0(((1,4),(2,4)))(\psi)= \left\{\begin{array}{ll}
0.806 & \mbox{ if } \psi=(1,4)\\
0.194 & \mbox{ if } \psi=(2,4)\\
0 & \mbox{ else }\\
\end{array}\right.,$$
see Fig. \ref{fig:phasespace_C5P5}. White noise boundary conditions are applied to describe a random arsenate source in the tank, and deterministic initial values represent a pipe which is completely empty in the beginning. The observed dynamics is shown in Fig. \ref{fig:C5P5_0h}-\ref{fig:C5P5_7h}: Dissolved arsenate is transported along the pipe, and over time the walls are covered more and more with adsorbed arsenate. After $24$ hours a steady state is reached, and we compare it to a Monte Carlo calculation, see Fig. \ref{fig:MC_C5P5}-\ref{fig:C5P5_24h}. The latter has also been obtained on the basis of the method of characteristics with $\Delta t'=0.1min$ and $\Delta x'=1m$ for $20000$ evaluations. The boundary condition has been drawn from the stationary boundary distribution every $10min$ and held constant in the meantime. Our example features an interesting probabilitic effect due to the nonlinearity of the reaction equations. Although the boundary values are distributed in the $D$-domains $2-4$, the consumer mostly observes dissolved arsenate at a concentration of domain $4$ in the steady state.

\begin{figure}
\subfloat[\label{fig:MC_C5P5}]{\includegraphics[width=8cm]{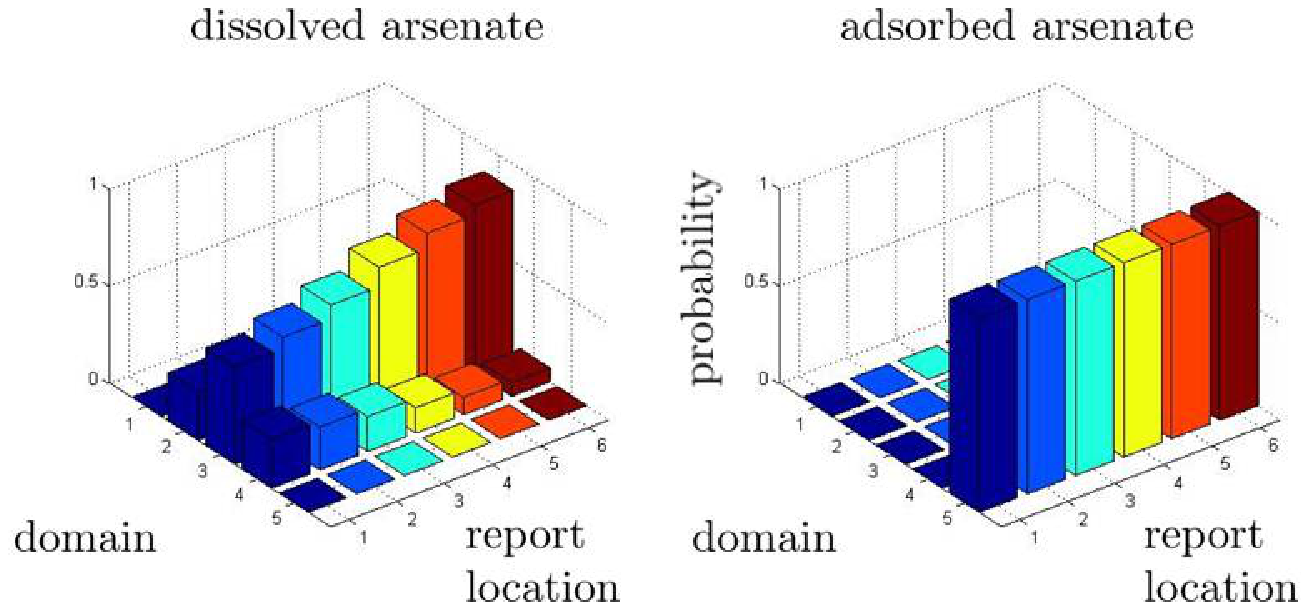}}
\hfill
\subfloat[\label{fig:C5P5_24h}]{\includegraphics[width=8cm]{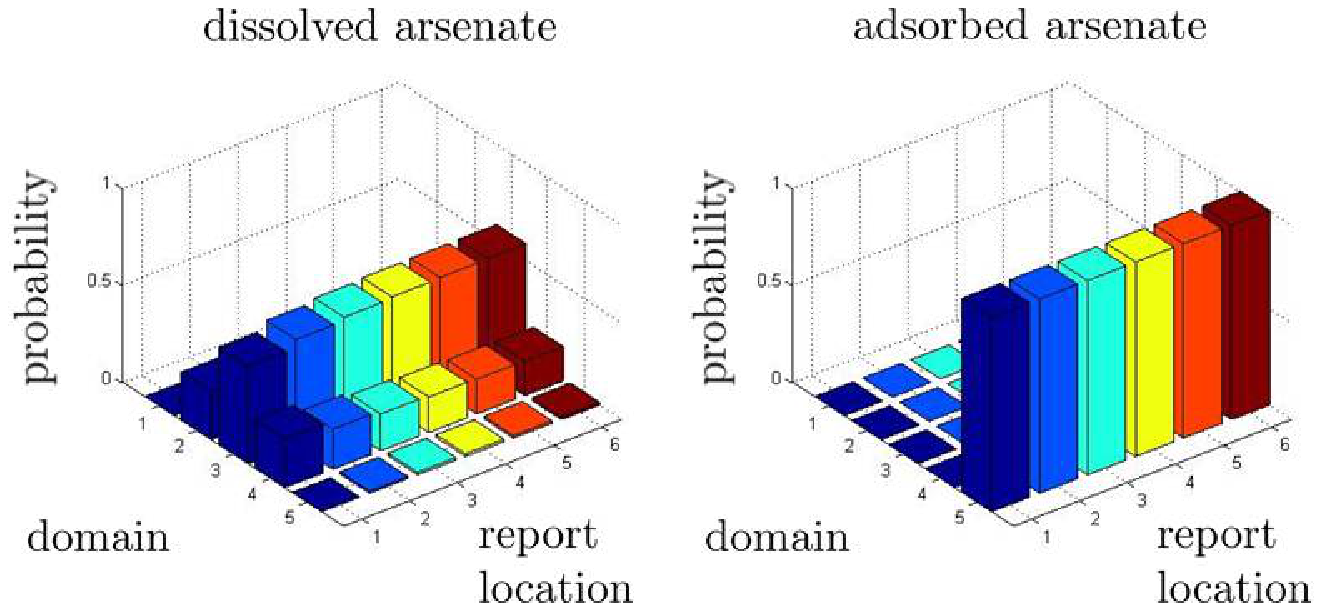}}

\subfloat[\label{fig:C5P15_24h}]{\includegraphics[width=8cm]{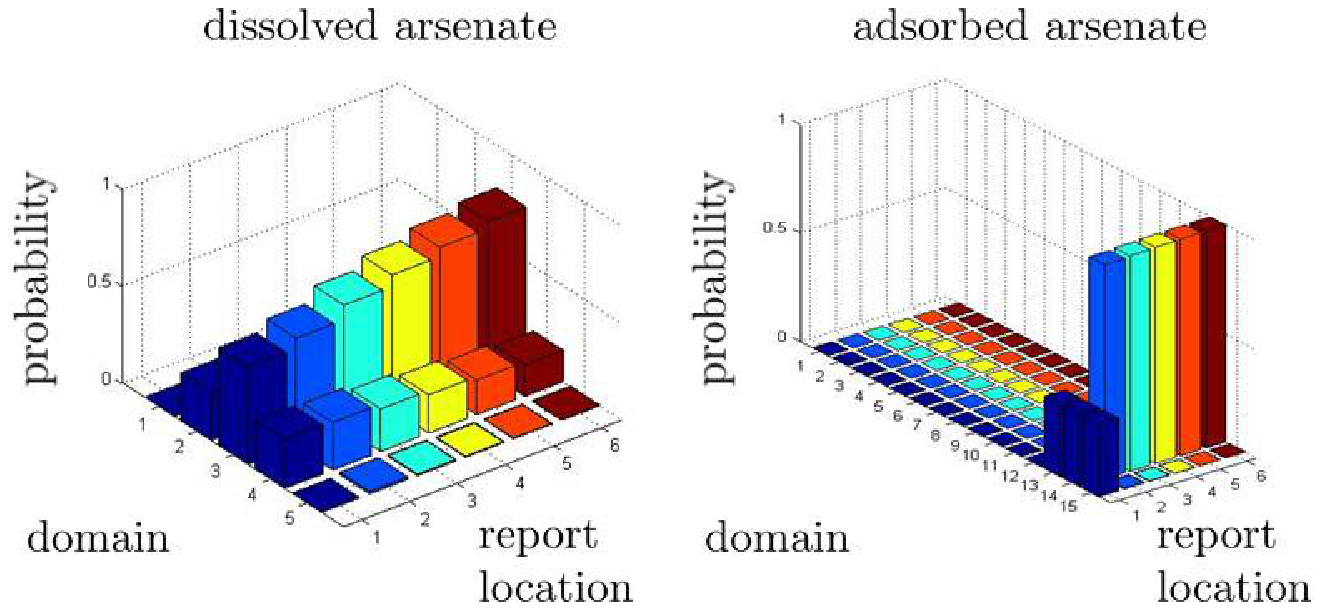}}
\hfill
\subfloat[\label{fig:C5P5_24h_W2}]{\includegraphics[width=8cm]{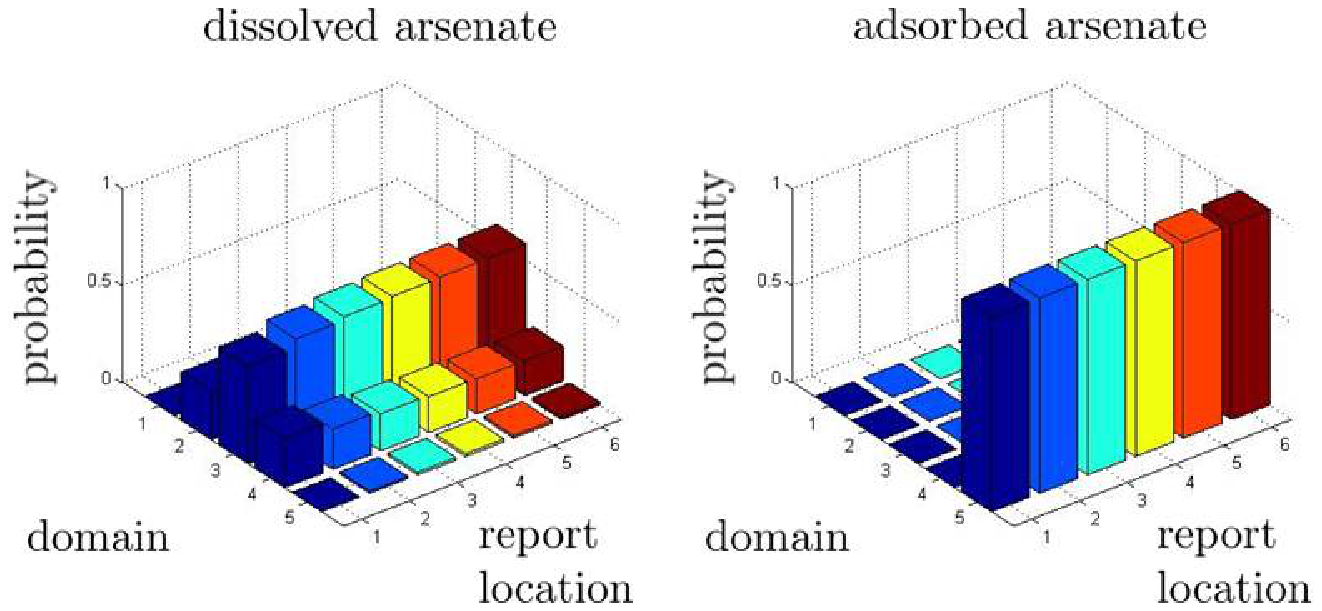}}

\caption{Steady states after $24$ hours. (a) shows the result of a Monte Carlo computation. In (b) one finds the result for a CPA with a state space resolution of $5\times 5$ and pattern size $\tilde V=V=\{0\}$, in (c) the results for $5 \times 15$ and $\tilde V=V=\{0\}$, and in (d) the ones for $5 \times 5$ and $\tilde V=\{0\}, V=\{-1,0\}$. }
\label{fig:steadystate}
\end{figure}

Furthermore we plot the steady state results from CPA for which the approximation parameters are altered. In Fig. \ref{fig:C5P15_24h} the result is plotted for $\tilde V=V=\{0\}$ with an equidistant phase space partition of $5$ domains in $D$- and $15$ domains in $A$-direction, whereas in Fig. \ref{fig:C5P5_24h_W2} the pattern size is extended by $W=\{-1,0\}$ to $V=\{-1,0\}$. It is observed that in this example increasing the pattern size does not improve the result if compared to the Monte Carlo case, but increasing the state space resolution has a notable effect. In all cases we used 75 test points for the coding domain at site $0$ and 37 at site $-1$ to approximate the local function, and a probability threshold of $0.00005$ in the simulation. 

We note that there is often no interest in global results and accordingly no need to transform between local and fully global states with $\hat \alpha$. Local information like indicated in the graphs can be directly extracted from the CPA result. Similarly, in practice the information about initial values is often given locally, such that there is no need to use the full $\hat \beta$. Besides, we note that the discrete state space information is often completely sufficient in practice. In the example a consumer is interested rather in risk level or threshold information about water contamination than in information in form of exact concentrations. In some biological sytems even the Boolean case, $|E|=2$, is enough \cite{Maria}.

\section{Conclusion}

\label{sec:Conclusions}

We have introduced a numerical scheme for density based uncertainty propagation in certain spatio-temporal systems. It consists of a preprocessing step, in which the underlying PDE system is translated into a cellular probabilistic automaton, and a simulation step, in which initial and boundary conditions are evolved. The simulation is parallelizable in the space extension and fast in relation to the preprocessing. Furthermore it computes on discrete states instead of on the continuous phase space. Because the discrete states can be interpreted as risk levels, fast uncertainty propagation directly on this simplified state space suites industrial demands. 

The algorithm is based on state space discretization like in set oriented numerics and on the de Bruijn state idea from cellular automata theory. There are two parameters that allow to control the approximation of the exact density evolution: state space resolution and de Bruijn pattern size. We have proven consistency of the method for uncertain initial conditions under deterministic dynamics and have paved the way towards the handling of spatio-temporal processes with more involved stochastic influence. More precisely, it has been shown how to deal with white noise boundary conditions, an important topic for example in contaminant transport modeling. However, it seems difficult to preserve temporal correlations in random parameters with our algorithm. Future research will focus on white noise parameters. We are confident that they only extend the preprocessing, whereas the simulation step is not changed. In this sense CPA promise to overcome the curse of dimension in parameter space. Besides, we want to investigate how our algorithm performs in more complex applications like the simulation of drinking or waste water grids.

\section{Acknowledgements}
The authors would like to thank Birgit Obst from Siemens Corporate Technology for helpful discussions about the arsenate example. 

\bibliographystyle{siam} 
\bibliography{dkohler_siam}

\begin{thebibliography}{10}

\bibitem{AugustinRentrop}
{\sc F.~Augustin and P.~Rentrop}, {\em {Stochastic Galerkin techniques for
  random ordinary differential equations}}, Numer. Math.,  (2012), pp.~1--21.

\bibitem{SchwabMLMC}
{\sc A.~Barth, C.~Schwab, and N.~Zollinger}, {\em {Multi-Level Monte Carlo
  Finite Element method for elliptic PDE's with stochastic coefficients}}, ETH
  Z\"{u}rich, Research Report, 2010-18 (2010).

\bibitem{Maria}
{\sc M.~Davidich and S.~Bornholdt}, {\em {The transition from differential
  equations to Boolean networks: A case study in simplifying a regulatory
  network model.}}, J. Theor. Biol., 255 (2008), pp.~269--277.

\bibitem{DellnitzJunge1}
{\sc M.~Dellnitz and O.~Junge}, {\em {On the Approximation of Complicated
  Dynamical Behavior}}, SIAM J. Numer. Anal., 36 (1999), pp.~491--515.

\bibitem{DellnitzJunge2}
\leavevmode\vrule height 2pt depth -1.6pt width 23pt, {\em {Set oriented
  numerical methods for dynamical systems}}, vol.~2 of Handbook of Dynamical
  Systems, Elsevier Science, 2002, pp.~221--264.

\bibitem{CApatternformation}
{\sc A.~Deutsch and S.~Dormann}, {\em {Cellular automaton modeling of
  biological pattern formation}}, Birkh\"{a}user, Boston, MA, 2005.

\bibitem{FoxQMC}
{\sc B.~L. Fox}, {\em {Strategies for Quasi-Monte Carlo}}, Kluwer Academic
  Publishers Group, Dordrecht, Netherlands, 1999.

\bibitem{GhanemSpanos}
{\sc R.~Ghanem and P.~Spanos}, {\em {Stochastic Finite Elements: A Spectral
  Approach}}, Springer-Verlag, New York, NY, 1991.

\bibitem{probmetricsreview}
{\sc A.~L. Gibbs and F.~E. Su}, {\em {On choosing and bounding probability
  metrics}}, Internat. Statist. Rev., 70 (2002), pp.~419--435.

\bibitem{SpiegelhalterGilksMCMC}
{\sc W.~R. Gilks, S.~Richardson, and D.~J. Spiegelhalter}, {\em {Markov Chain
  Monte Carlo in practice}}, Chapman and Hall/CRC, Boca Raton, FL, 1995.

\bibitem{GrueneJunge}
{\sc L.~Gr\"{u}ne and O.~Junge}, {\em {Global optimal control of perturbed
  systems}}, J. Optimiz. Theory App., 136 (2008), pp.~411--429.

\bibitem{GuckenheimerHolmes}
{\sc J.~Guckenheimer and P.~Holmes}, {\em {Nonlinear Oscillations, Dynamical
  Systems, and Bifurcations of Vector Fields}}, Springer-Verlag, New York, NY,
  1983.

\bibitem{Guder}
{\sc R.~Guder, M.~Dellnitz, and E.~Kreuzer}, {\em {An adaptive method for the
  approximation of the generalized cell mapping}}, Chaos, Soliton. Fract., 8
  (1997), pp.~525--534.

\bibitem{MSSCA}
{\sc F.~v. Haeseler, H.-O. Peitgen, and G.~Skordev}, {\em {Cellular automata,
  matrix substitutions and fractals}}, Ann. Math. Artif. Intel., 8 (1993),
  pp.~345--362.

\bibitem{Hanche-OlsenHolden}
{\sc H.~Hanche-Olsen and H.~Holden}, {\em {The Kolmogorov-Riesz compactness
  theorem}}, Expo. Math., 28 (2010), pp.~385--394.

\bibitem{stochasticCA}
{\sc J.~Hawkins and D.~Molinek}, {\em {One-dimensional stochastic cellular
  automata}}, Topol. P., 31 (2007), pp.~515--532.

\bibitem{Hsu}
{\sc C.~S. Hsu}, {\em {Cell-to-Cell Mapping: A Method of Global Analysis for
  Nonlinear Systems}}, Springer, New York, NY, 1987.

\bibitem{Hunt}
{\sc F.~Y. Hunt}, {\em {A Monte Carlo approach to the approximation of
  invariant measures}}, Random Comput. Dynam., 2 (1994), pp.~111--133.

\bibitem{JungeMarsdenMezic}
{\sc O.~Junge, J.~E. Marsden, and I.~Mezi\'{c}}, {\em {Uncertainty in the
  dynamics of conservative maps}}, Proceedings of the 43rd IEEE Conference on
  Decision and Control, 2004.

\bibitem{CAsurveyKari}
{\sc J.~Kari}, {\em {Theory of cellular automata: A survey}}, Theor. Comput.
  Sci., 334 (2005), pp.~3--33.

\bibitem{SchwabPC}
{\sc G.~E. Karniadakis, C.~H. Su, D.~Xiu, D.~Lucor, C.~Schwab, and R.~A.
  Todor}, {\em {Generalized polynomial chaos solution for differential
  equations with random inputs}}, ETH Z\"{u}rich, Research Report, 2005-01
  (2005).

\bibitem{pairformation}
{\sc J.~M. Keeling}, {\em {The effects of local spatial structure on
  epidemiological invasions}}, Proc. R. Soc. Lond. B, 266 (1999), pp.~859--867.

\bibitem{KloedenPlaten}
{\sc P.~E. Kloeden and E.~Platen}, {\em {Numerical Solution of Stochastic
  Differential Equations}}, Springer, New York, NY, 2011.

\bibitem{KloedenPlaten2}
{\sc P.~E. Kloeden, E.~Platen, and H.~Schurz}, {\em {Numerical Solution of SDE
  through Computer Experiments}}, Springer, Berlin, Germany, 1994.

\bibitem{Klostermannexp}
{\sc S.~Klostermann, R.~Murray, J.~Szabo, J.~Hall, and J.~Uber}, {\em {Modeling
  and simulation of arsenate fate and transport in a distribution system
  simulator}}, in Proceedings of Water Distribution System Analysis, 2010.

\bibitem{Koltai}
{\sc P.~Koltai}, {\em {Efficient approximation methods for the global long-term
  behavior of dynamical systems - Theory, algorithms and examples}}, PhD
  thesis, 2010.

\bibitem{Koopal}
{\sc L.~K. Koopal and M.~J. Avena}, {\em {A simple model for adsorption
  kinetics at charged solid-liquid interfaces}}, Colloid. Surface. A, 192
  (2001), pp.~93--107.

\bibitem{Kushner}
{\sc H.~Kushner and P.~Dupuis}, {\em {Numerical Methods for Stochastic Control
  Problems in Continuous Time}}, Springer, New York, NY, 2001.

\bibitem{Langtangen}
{\sc H.~P. Langtangen}, {\em {Numerical solution of first passage problems in
  random vibrations}}, SIAM J. Sci. Comput., 15 (1994), pp.~977--996.

\bibitem{Lasota}
{\sc A.~Lasota and M.~C. Mackey}, {\em {Chaos, Fractals, and Noise: Stochastic
  Aspects of Dynamics}}, Springer, New York, NY, 1993.

\bibitem{nnls}
{\sc C.~L. Lawson and R.~J. Hanson}, {\em {Solving least squares problems}},
  Prentice-Hall, Englewood Cliffs, NJ, 1974.

\bibitem{LohLHS}
{\sc W.~L. Loh}, {\em {On Latin Hypercube Sampling}}, Ann. Stat., 24 (1996),
  pp.~2058--2080.

\bibitem{MelchiersIS}
{\sc R.~E. Melchers}, {\em {Structural Reliability Analysis and Prediction}},
  John Wiley \& Sons, Chichester, United Kingdom, 1999.

\bibitem{MezicRun}
{\sc I.~Mezi\'{c} and T.~Runolfsson}, {\em {Uncertainty propagation in
  dynamical systems}}, Automatica, 44 (2008), pp.~3003--3013.

\bibitem{Oksendal}
{\sc B.~K. Oksendal}, {\em {Stochastic Differential Equations: An Introduction
  with Applications}}, Springer, New York, NY, 2002.

\bibitem{Pierce}
{\sc M.~L. Pierce and C.~B. Moore}, {\em {Adsorption of arsenite and arsenate
  on amorphous iron hydroxide}}, Water Res., 16 (1982), pp.~1247--1253.

\bibitem{Roessler}
{\sc A.~R\"{o}ssler}, {\em {Runge-Kutta Methods for the strong approximation of
  solutions of stochastic differential equations}}, SIAM J. Numer. Anal., 48
  (2010), pp.~922--952.

\bibitem{EPANETMSXmanual}
{\sc F.~Shang, J.~G. Uber, L.~A. Rossman, R.~Janke, and R.~Murray}, {\em
  {EPANET Multi-Species Extension User's Manual}}, US Environmental Protection
  Agency, Cincinatti, OH, 2011.

\bibitem{sutner}
{\sc K.~Sutner}, {\em {De Bruijn Graphs and Linear Cellular Automata}}, Complex
  Systems, 5 (1991), pp.~19--30.

\bibitem{Ulam}
{\sc S.~M. Ulam}, {\em {A Collection of Mathematical Problems}}, Interscience
  Publisher, New York, NY, 1960.

\bibitem{VollmarCA}
{\sc R.~Vollmar}, {\em {Algorithmen in Zellularautomaten}}, Teubner, Stuttgart,
  Germany, 1979.

\bibitem{WanKarniadakis}
{\sc X.~Wan and G.~E. Karniadakis}, {\em {Multi-element generalized polynomial
  chaos for arbitrary probability measures}}, SIAM J. Sci. Comput., 28 (2006),
  pp.~901--928.

\bibitem{randomsources}
{\sc P.~P. Wang and C.~Zheng}, {\em {Contaminant transport models under random
  sources}}, Ground water, 43 (2005), pp.~423--433.

\bibitem{WienerPC}
{\sc N.~Wiener}, {\em {The homogeneous chaos}}, Am. J. Math., 60 (1938),
  pp.~897--936.

\bibitem{Wloka}
{\sc J.~Wloka}, {\em {Funktionalanalysis und Anwendungen}}, Walter de Gruyter,
  Berlin, Germany, 1971.

\end{thebibliography}

\end{document}